\lstdefinelanguage{GAP}{%
    morekeywords=[2]{and,break,continue,do,elif,else,end,fail,false,fi,for,%
        function,if,in,local,mod,not,od,or,rec,repeat,return,then,true,%
        until,while,%
        List, Product, Length, First, IsZero, Remove, Add, Difference, LeadingCoefficient, FactorizationsIntegerWRTList, ShallowCopy, DegreeOfLaurentPolynomial, Length, NumericalSemigroup, AmbientNumericalSemigroupOfIdeal, MinimalGenerators, Union, IntersectionIdealsOfNumericalSemigroup},%
    moredelim=[s][\color{blue}]{gap}{>},%
    moredelim=[s][\color{red}]{brk}{>},%
    sensitive=true,%
    morecomment=[l]\#,%
    morestring=[b]',%
    morestring=[b]",%
    }%
\title{Canonical bases of modules over one dimensional $\KK$-algebras}
\subjclass[2000]{14H20, 14H50, 05E15}
\author{A. Abbas}
\address{Universit\'e d'Angers, Math\'ematiques,
49045 Angers ceded 01, France}
\email{abbas@univ-angers.fr}
\author{A. Assi} 
\address{Universit\'e d'Angers, Math\'ematiques,
49045 Angers ceded 01, France}
\email{assi@univ-angers.fr} 
\author{P. A. Garc\'{\i}a-S\'anchez}
\address{Departamento de \'Algebra and IEMath-GR, Universidad de Granada, E-18071 Granada, Espa\~na}
\thanks{The third author is supported by the projects MTM2014-55367-P, FQM-343, and FEDER funds}
\email{pedro@ugr.es}
\urladdr{www.ugr.es/local/pedro}
\date{\today}
\newtheorem{teorema}{Theorem}[section]
\newtheorem{ex}[teorema]{Example}
\newtheorem{alg}[teorema]{Algorithm}
\newtheorem{p}[teorema]{Theorem}
\newtheorem{proposicion}[teorema]{Proposition}
\newtheorem{lema}[teorema]{Lemma}
\newtheorem{definicion}[teorema]{Definition}
\newtheorem{corolario}[teorema]{Corollary}
\theoremstyle{remark}
\newtheorem{nota}[teorema]{Remark}
\newcommand{\BB}{{\bf B}}
\newcommand{\KK}{{\bf K}}
\newcommand{\NN}{{\mathbb N}}
\DeclareMathOperator{\supp}{supp}
\DeclareMathOperator{\App}{App}
\begin{document}
\maketitle

\begin{abstract}
{Let $\mathbf K$ be a field and denote by $\mathbf K[t]$, the polynomial ring with coefficients in $\mathbf K$. Set $\mathbf A=\mathbf K[f_1,\ldots,f_s]$, with $f_1,\ldots, f_s \in \mathbf K[t]$. We give a procedure to calculate the monoid of degrees of the $\mathbf K$ algebra $\mathbf M=F_1\mathbf A+\cdots +F_r\mathbf A$ with $F_1,\ldots , F_r\in \mathbf K[t]$. We show some applications to the problem  of the classification of plane polynomial curves (that is, plane algebraic curves parametrized by polynomials) with respect to some oh their invariants, using the module of K\"ahler differentials.}
\end{abstract}

\section{Introduction}

Let $\mathbf K$ be a field of characteristic zero and let $f(X,Y)$ be a nonzero irreducible element of ${\mathbf K}[X,Y]$. Let $C=\lbrace (x,y)\in{\mathbf K}^2 \mid f(x,y)=0\rbrace$ be the plane algebraic curve defined by $f$. There are some important invariants that can be associated with $C$: the Milnor number, $\mu(f)$, which is the rank of ${\mathbf K}[X,Y]/(f_X,f_Y)$, and the Turina number, $\nu(f)$, which is the rank of ${\mathbf K}[X,Y]/(f,f_X,f_Y)$ (where $f_X,f_Y$ denote the partial derivatives of $f$). The first one tells us how singular is the family of curves $C_{\lambda}=\lbrace (x,y) \mid f(x,y)-\lambda=0\rbrace$, and the second one tells us how singular is the curve $C$. Suppose that $C$ is parametrized by two polynomials $X(t), Y(t)\in{\mathbf K}[t]$. In this case, we can associate to $f$ a semigroup, denoted $\Gamma(f)$ and defined by $\Gamma(f)=\lbrace \mathrm{d}(g(X(t),Y(t)) \mid g(X,Y)\in {\mathbf K}[X,Y]\setminus (f)\rbrace$, where $\mathrm{d}(g(X(t),Y(t))$ denotes the degree in $t$ of $g(X(t),Y(t))$. 

Let ${\mathbf A}={\mathbf K}[X(t),Y(t)]$ be the ${\mathbf K}$-algebra generated by $X(t),Y(t)$. Then $A$ is the ring of coordinates of $C$. If $\lambda ({\mathbf K}[t]/{\mathbf A})<+\infty$, then $\Gamma(f)$ is a numerical semigroup, and $\mu(f)$ coincides with the conductor of $\Gamma(f)$. Let ${\mathbf M}=X'(t){\mathbf A}+Y'(t){\mathbf A}$ be the ${\mathbf A}$-module generated by the derivatives of $X(t),Y(t)$. The set of degrees in $t$ of elements of ${\mathbf M}$, denoted $\mathrm{d}(\mathbf M)$, defines an ideal of $\Gamma(f)$, and from the definition it follows that for all $s\in \Gamma(f)$, the element $s-1$ is  in $\mathrm{d}(\mathbf M)$. Such an element is called exact. In general, $\mathrm{d}(\mathbf M)$ contains elements that are non exact, and the cardinality of the set of these elements is bounded by the genus of $\Gamma(f)$. Furthermore, this cardinality is nothing but the difference $\mu-\nu$. Hence the numerical semigroup $\Gamma(f)$ and the ideal $\mathrm{d}(\mathbf M)$ offer a good computational approach to the study of these invariants.

This paper has two main goals. Given a ${\mathbf K}$-algebra ${\mathbf A}={\mathbf K}[f_1(t),\ldots,f_s(t)]$, we first describe an algorithm that computes a system of generators of the ideal consisting of degrees in $t$ of elements of the module ${\mathbf M}=F_1(t){\mathbf A}+\dots+F_r(t){\mathbf A}$ (where $f_1(t),\ldots,f_s(t),F_1(t),\ldots,F_r(t)\in{\mathbf K}[t]$). This algorithm uses the one given in [5] in order to compute the semigroup consisting of degrees in $t$ of elements of ${\mathbf A}$. Then we consider the case where ${\mathbf A}={\mathbf K}[X(t),Y(t)]$ is the ring of coordinates of the algebraic plane curve parametrized by $X(t),Y(t)$, and ${\mathbf K}$ is an algebraically closed field of characteristic zero. It turns out that the curve has one place at infinity, and if $f(X,Y)$ is a generator of the curve in ${\mathbf K}[X,Y]$, then the semigroup $\Gamma(f)$ introduced above, which  is the same as the semigroup associated with ${\mathbf A}$, can be calculated from the Abhyankar-Moh theory (see [4]). Using this fact and some techniques introduced in Section 6, we characterize the semigroup of polynomial curves when $\mu-\nu\in\{0,1,2\}$. 

\section{Numerical semigroups and ideals}

\subsection{Numerical semigroups}

Let $S$ be a subset of ${\mathbb N}$. The set $S$ is a submonoid of  ${\mathbb N}$ if the following holds:
\begin{enumerate}

\item $0\in S$,

\item  If $a,b\in S$ then $a+b\in S$.
\end{enumerate}

Clearly, $\lbrace 0\rbrace$ and ${\mathbb N}$ are submonoids of ${\mathbb N}$. Also, if $S$ contains a nonzero element $a$, then $d a\in S$ for all $d\in{\mathbb N}$, and in particular, $S$ is an infinite set. 

Let $S$ be a submonoid of ${\mathbb N}$ and let $G$ be the subgroup of ${\mathbb Z}$ generated by $S$ (that is, $G=\lbrace \sum_{i=1}^s\lambda_i a_i \mid \lambda_i\in{\mathbb Z}, a_i\in S\rbrace$). If $1\in G$, then we say that $S$ is a \emph{numerical semigroup}. This is equivalent to the condition that $\NN\setminus S$ is a finite set.

We set $\mathrm G(S)=\NN\setminus S$ and we call it the set of \emph{gaps} of $S$. We denote by $\mathrm g(S)$ the cardinality of $\mathrm G(S)$, and we call $\mathrm g(S)$ the \emph{genus} of $S$. We set $\mathrm{F}(S)=\max(\mathrm G(S))$ and we call it the \emph{Frobenius number} of $S$. We also define $\mathrm C(S)=\mathrm{F}(S)+1$ and we call it the \emph{conductor} of $S$. The least positive integer of $S$,  $\mathrm m(S)=\inf(S\setminus \lbrace 0\rbrace$ is known as the \emph{multiplicity} of $S$.

Even though any numerical semigroup has infinitely many elements, it can be described by means of finitely many of them. The rest can be obtained as linear combinations with nonnegative integer coefficients from these finitely many. 

Let $S$ be a numerical semigroup and let $A\subseteq S$. We say that $S$ is generated by $A$ and we write $S=\langle A\rangle$ if for all $s\in S$, there exist $a_1,\dots,a_r\in A$ and $\lambda_1,\dots,\lambda_r\in\NN$ such that $a=\sum_{i=1}^r\lambda_ia_i$. Every numerical semigroup $S$ is finitely generated, that is, $S=\langle A\rangle$ with $A\subseteq S$ and $A$ is a finite set.

Let $n\in S^*$. We define the \emph{Ap\'ery set} of $S$ with respect to $n$, denoted $\mathrm{Ap}(S,n)$, to be the set 
\[ \mathrm{Ap}(S,n)=\lbrace s\in S\mid s-n\notin S\rbrace.\]

Let $S$ be a numerical semigroup and let $n\in S^*$. For all $i\in \{1,\ldots, n\}$, let $w(i)$ be the smallest element of $S$ such that $w(i)\equiv i \bmod n$. Then
\[\mathrm{Ap}(S,n)=\lbrace 0,w(1),\dots,w(n-1)\rbrace.\]
Furthermore, $S=\langle n, w(1),\ldots,w(n-1)\rangle$. 

We will be interested in a special class of numerical semigroups, namely free numerical semigroups. The definition is as follows.

\begin{definicion} Let $S=\langle r_0,r_1,\ldots,r_h\rangle$ be a numerical semigroup, and let $d_{i+1}=\gcd(r_0,r_1,\ldots,r_i)$ for all $i\in\lbrace 0,\ldots,h\rbrace$ (in particular $d_1=r_0$ and $d_{h+1}=1$) and $e_i=\frac{d_i}{d_{i+1}}$ for all $i\in\lbrace 1,\ldots,h\rbrace$. We say that $S$ is \emph{free} for the arrangement $(r_0,\ldots,r_h)$ if the following conditions hold:

\begin{enumerate}[(1)]
\item $d_1>d_2>\dots>d_{h+1}=1$,
\item $e_ir_i\in \langle r_0,\ldots,r_{i-1}\rangle$ for all $i\in\lbrace 1,\ldots,h\rbrace$.
\end{enumerate}

\end{definicion}

Note that the notion of freeness depends on the arrangement of the generators. For example, $S=\langle 4,6,13\rangle$ is free for the arrangement $(4,6,13)$ but it is not free for the arrangement $(13,4,6)$. Note also that if $S=\langle r_0,r_1,\ldots,r_h\rangle$ is free with respect to the arrangement  $(r_0,\ldots,r_h)$, then $\mathrm g(S)=\frac{\mathrm C(S)}{2}$.

Let $S=\langle r_0,r_1,\ldots,r_h\rangle$ and suppose that $S$ is free with respect to the arranegment  $(r_0,\ldots,r_h)$. Let the notations be as above, given $s\in{\mathbb Z}$, there exist $\lambda_0,\lambda_1,\ldots,\lambda_h\in \mathbb{Z}$ such that 
\[s=\sum_{i=0}^h \lambda_ir_i \text{ and }0\leq \lambda_i< e_i,\ i\in\{1,\ldots, h\}.\] 
Such a representation is unique. We call it the \emph{standard representation} of $s$. We have $s\in S$ if and only if $\lambda_0\geq 0$. Also any free semigroup has the following properties.

\begin{proposicion}
Let $S=\langle r_0,\ldots, r_h\rangle$ be a free numerical semigroup with respect to the arrangement $(r_0,\ldots, r_h)$. 
\begin{enumerate}[i)]
\item $\mathrm{F}(S)=\sum_{i=1}^h(e_i-1)r_i-r_0$.
\item For all $a,b\in{\mathbb Z}$, if $a+b=\mathrm{F}(S)$, then $a\in S$ if and only if $b\notin S$. In other words, $S$ is a symmetric numerical semigroup.
\item $\mathrm{Ap}(S,r_0)=\left\lbrace \sum_{i=1}^h\lambda_ir_i \mid 0\leq \lambda_i<e_i \text{ for all } i\in\lbrace 1,\ldots,h\rbrace\right\rbrace$. 
\end{enumerate} 
\end{proposicion}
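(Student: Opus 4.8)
The plan is to deduce all three statements directly from the standard representation recalled just above: every $s\in\mathbb Z$ has a unique expression $s=\sum_{i=0}^h\lambda_i r_i$ with $0\le\lambda_i<e_i$ for $i\in\{1,\dots,h\}$, and $s\in S$ if and only if $\lambda_0\ge 0$. All three parts are then essentially bookkeeping with these coefficients, so I would start with (iii), since (i) drops out of it, and finish with (ii).

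For (iii), I would take $s\in S$ with standard representation $s=\sum_{i=0}^h\lambda_i r_i$ (so $\lambda_0\ge 0$) and observe that, because the inequalities $0\le\lambda_i<e_i$ for $i\ge 1$ are untouched, $s-r_0=(\lambda_0-1)r_0+\sum_{i=1}^h\lambda_i r_i$ is the standard representation of $s-r_0$. Hence $s-r_0\notin S$ exactly when $\lambda_0-1<0$, i.e. when $\lambda_0=0$, so $\mathrm{Ap}(S,r_0)$ is precisely the set of $s\in S$ whose standard representation has $\lambda_0=0$ — which is the displayed set. (Uniqueness of standard representations also makes these $\prod_{i=1}^h e_i=r_0$ elements pairwise distinct, matching $|\mathrm{Ap}(S,r_0)|=r_0$.)

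For (i), an integer $a$ lies outside $S$ exactly when the coefficient $\lambda_0$ of its standard representation satisfies $\lambda_0\le -1$; since the $r_i$ are positive and, by uniqueness, every vector of coefficients in the allowed ranges occurs, the largest gap is reached at $\lambda_0=-1$ and $\lambda_i=e_i-1$ for $i\ge 1$, giving $\mathrm{F}(S)=\sum_{i=1}^h(e_i-1)r_i-r_0$ (equivalently $\max\mathrm{Ap}(S,r_0)-r_0$ via (iii)). For (ii), let $a+b=\mathrm{F}(S)$. If $a,b\in S$ then $\mathrm{F}(S)=a+b\in S$, impossible; and if $a\notin S$, writing $a=\sum_{i=0}^h\lambda_i r_i$ in standard form with $\lambda_0\le -1$, the identity
\[
b=\mathrm{F}(S)-a=\sum_{i=1}^h(e_i-1-\lambda_i)r_i+(-1-\lambda_0)r_0
\]
exhibits, since $0\le e_i-1-\lambda_i<e_i$ for $i\ge 1$ and $-1-\lambda_0\ge 0$, the standard representation of $b$ with nonnegative $r_0$-coefficient, so $b\in S$. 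Thus exactly one of $a,b$ lies in $S$ and $S$ is symmetric.

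I do not expect a genuine obstacle here; the only things needing care are checking that each manipulated coefficient still lies in its prescribed range so that uniqueness of the standard representation may be invoked, justifying in (i) that maximizing $\sum\lambda_i r_i$ over the allowed coefficient vectors is legitimate (i.e. every such vector actually occurs), and — if one prefers to route (i) through $\max\mathrm{Ap}(S,r_0)-r_0$ — recalling the residue-class description of the gaps of a numerical semigroup.
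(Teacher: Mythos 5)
Your proof is correct. The paper itself states this proposition without proof (it is standard background, available e.g.\ in the cited monograph on numerical semigroups), so there is nothing to compare against; your argument is the standard one, resting entirely on the existence and uniqueness of the standard representation that the paper recalls immediately before the statement, and each coefficient manipulation you perform does stay within the prescribed ranges, so the appeals to uniqueness are all legitimate.
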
  

\subsection{Ideals of numerical semigroups}

Let $S$ be a numerical semigroup of ${\mathbb N}$ and let $I$ be a nonempty set of $\mathbb{N}$. We say that $I$ is a \emph{relative ideal} of $S$ if for all $(a,s)\in I\times S$, $a+S\in I$ ($I+S\subseteq I$ for short) and there exists $d\in\mathbb Z$ such that $d+I\subseteq S$. This second condition is equivalent to saying that $I$ has a minimum.

Define the following order on ${\mathbb Z}: n_1\leq_S n_2$ if  $n_2-n_1\in S$. Let $E\subset {\mathbb N}$. We say that $n\in E$ is a minimal element of $E$ with respect to $\leq_S$ if for all $s\in E$, the condition $s\leq_S n$ implies $n=s$. We denote by 
$\mathrm{Minimals}_{\le_S}(E)$ the set of minimal elements of $E$ with respect to $\le_S$.

If $I$ is an ideal of $S$, then there exist a set $\lbrace a_1,\ldots,a_l\rbrace\subseteq I$ such that $I=\bigcup_{i=1}^l(a_i+S)$. We say that $\lbrace a_1,\ldots,a_l\rbrace$ is a \emph{system of generators}  of $I$. If furthermore $a_k\notin\bigcup_{i\not=k}(a_i+S)$, then we say that $\lbrace a_1,\ldots,a_l\rbrace$ is a \emph{minimal set of generators} of $I$. Observe that all minimal generators are incongruent modulo $\mathrm m(S)$, and thus a minimal set of generators of $I$ has at most $m(S)$ elements. This set coincides with $\mathrm{Minimals}_{\le_S}(I)$.

Intersection of two relative ideals is again a relative ideal. In particular, given $a,b\in\mathbb N$, $(a+S)\cap(b+S)$ is a relative ideal. Assume that $\lbrace a_1,\ldots,a_r\rbrace$ is the set of minimal generators of $(a+S)\cap(b+S)$. We set 
\[\mathrm R(a,b)=\lbrace (a_k-a,a_k-b), k\in\{1,\ldots,r\}\rbrace.\]

\begin{ex} Let $S=\langle 3,4\rangle=\lbrace 0,3,4,6,7,\to\rbrace$ and let $a=3$, $b=5$. We have $3+S=\lbrace 3,6,7,9,10,\to\rbrace$ and $5+S=\lbrace 5,8,9,11,12,\to\rbrace$. Hence $(3+S)\cap(5+S)=\lbrace 9, 11,12,\to \rbrace=(9+S)\cup (11+S)$. Note that $\lbrace 9,11\rbrace$ is the set of minimal elements of $(3+S)\cap(5+S)$ with respect to $\leq_S$ and that $\mathrm R(3,5)=\lbrace (6,4),(8,6)\rbrace$.
\end{ex}

\section{Relators for monomial subalgebras}

Let $S=\langle s_1,\ldots,s_n\rangle$ be a numerical semigroup and let $I$ be a relative ideal of $S$. Let $\lbrace a_1,\ldots,a_r\rbrace$ be a minimal system of generators of $I$. 
Let $\KK$ be a field and consider the algebra ${\bf A}=\KK[t^{s_1},\ldots,t^{s_n}]=\mathbf K[S]$. Let ${\bf M}=t^{a_1}{\bf A}+\dots+t^{a_r}{\bf A}$ and let
\[
\phi: \mathbf A^r\to \textbf M,\quad \phi(f_1,\ldots,f_r)=t^{a_1}f_1+\dots+t^{a_r}f_r.
\] 
The kernel $\ker(\phi)$ is a submodule of $\mathbf A^{r}$. The following result gives explicitely a generating system for $\ker(\phi)$.

\begin{teorema} \label{th:kernel} 
Let $S$
be a numerical semigroup and let $I$ be a relative ideal of $S$ minimally generated by $\{a_1,\ldots,a_r\}$. 
Let $\varphi$ be the morphism 
\[
\phi: \mathbf A^r\to t^{a_1}{\bf A}+\dots+t^{a_r}{\bf A},\quad \phi(f_1,\ldots,f_r)=t^{a_1}f_1+\dots+t^{a_r}f_r.
\]
Then $\ker(\phi)$
is generated by 
\[\left\lbrace t^{\alpha}{\bf e}_i-t^{\beta}{\bf e}_j\mid i,j\in \{1,\ldots, r\},  i\not=j,  (\alpha,\beta)\in \mathrm R(a_i,a_j) \right\rbrace,\] where $\lbrace {\bf e}_1,\ldots,{\bf e}_r\rbrace$ denotes the canonical basis of ${\bf A}^r$\negthinspace. 
\end{teorema}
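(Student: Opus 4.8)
First I would dispatch the easy inclusion $\supseteq$: if $(\alpha,\beta)\in\mathrm R(a_i,a_j)$ then $a_i+\alpha=a_j+\beta$ is a minimal generator of $(a_i+S)\cap(a_j+S)$, so in particular $\alpha,\beta\in S$, hence $t^\alpha\mathbf e_i-t^\beta\mathbf e_j\in\mathbf A^r$ and $\phi(t^\alpha\mathbf e_i-t^\beta\mathbf e_j)=t^{a_i+\alpha}-t^{a_j+\beta}=0$. Note also that the displayed set is finite, there being finitely many pairs $(i,j)$ and each $\mathrm R(a_i,a_j)$ being finite.

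For the reverse inclusion the plan is to use the $\mathbb N$-grading by $t$-degree. Write $\mathbf A=\bigoplus_{s\in S}\KK t^s$ and grade $\mathbf A^r$ by declaring $\mathbf e_i$ homogeneous of degree $a_i$; then $\phi$ maps the degree-$d$ component of $\mathbf A^r$ into $\mathbf A_d\subseteq\KK[t]_d$, so $\phi$ is homogeneous of degree $0$ and $\ker(\phi)$ is a graded submodule of $\mathbf A^r$, hence generated by its homogeneous elements. The degree-$d$ component of $\mathbf A^r$ is spanned over $\KK$ by $\{t^{d-a_i}\mathbf e_i : i\in T_d\}$ with $T_d=\{i : d-a_i\in S\}$, and a vector $v=\sum_{i\in T_d}c_i\, t^{d-a_i}\mathbf e_i$ lies in $\ker(\phi)$ if and only if $\sum_{i\in T_d}c_i=0$.

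The key reduction is a telescoping trick. Enumerating $T_d=\{i_1,\dots,i_k\}$ and using $\sum_l c_{i_l}=0$, one has $v=\sum_{l=2}^{k}c_{i_l}\bigl(t^{d-a_{i_l}}\mathbf e_{i_l}-t^{d-a_{i_1}}\mathbf e_{i_1}\bigr)$, so it suffices to show that every ``binomial'' $w=t^{d-a_i}\mathbf e_i-t^{d-a_j}\mathbf e_j$ with $i\neq j$ and $d\in(a_i+S)\cap(a_j+S)$ lies in the submodule generated by the displayed set. To see this, write $(a_i+S)\cap(a_j+S)$ as the union of the sets $a_k+S$ over its minimal generators $a_k$, choose one with $d\in a_k+S$, and put $d=a_k+e$ with $e\in S$. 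Then $(\alpha,\beta):=(a_k-a_i,\,a_k-a_j)\in\mathrm R(a_i,a_j)$, so $g:=t^{\alpha}\mathbf e_i-t^{\beta}\mathbf e_j$ belongs to the displayed set, and $t^{e}g=t^{d-a_i}\mathbf e_i-t^{d-a_j}\mathbf e_j=w$ since $t^e\in\mathbf A$. Thus $w$, and hence $v$, lies in the required submodule, which completes the argument.

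The step I expect to be the crux is the telescoping reduction: it is what guarantees that relations involving three or more of the basis vectors $\mathbf e_i$ need not be listed separately, and it plays the role here that the spanning-tree argument plays in showing that the kernel of a monomial map between polynomial rings is generated by binomials. Everything else is a routine unwinding of the definition of $\mathrm R(a_i,a_j)$ together with the identification $\mathbf A=\KK[S]$ and the standard fact that a graded submodule is generated by its homogeneous elements.
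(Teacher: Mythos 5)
Your proof is correct, but it takes a different route from the paper's. The paper argues by a division algorithm on leading terms: given $\mathbf f\in\ker(\phi)$, the leading monomial of $t^{a_1}f_1$ must be cancelled by a monomial of some $t^{a_i}f_i$, which places its degree in $(a_1+S)\cap(a_i+S)$ and hence lets one subtract an $\mathbf A$-multiple of a generator from the displayed set to strictly decrease the data, terminating after finitely many steps. You instead observe that $\phi$ is homogeneous for the grading in which $\mathbf e_i$ has degree $a_i$, so $\ker(\phi)$ is generated by its homogeneous elements; a homogeneous kernel element in degree $d$ is a vector of scalars summing to zero, which telescopes into binomials $t^{d-a_i}\mathbf e_i-t^{d-a_j}\mathbf e_j$, and each such binomial is an $\mathbf A$-multiple of a listed generator because $d$ factors through a minimal generator of $(a_i+S)\cap(a_j+S)$. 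Both arguments hinge on the same final step (the factorization $d=\gamma+e$ with $\gamma\in\mathrm{Minimals}_{\le_S}((a_i+S)\cap(a_j+S))$), but your graded decomposition plus telescoping is cleaner and avoids the induction on leading terms, at the cost of exploiting the purely monomial nature of the map; the paper's reduction-style proof is less tidy here but is the template that carries over to the non-monomial modules of Section 5, where no such grading is available. One cosmetic point: you reuse the symbol $a_k$ for a minimal generator of $(a_i+S)\cap(a_j+S)$, which collides with the generators $a_1,\ldots,a_r$ of $I$; a different letter (the paper uses $\gamma$) would avoid confusion.
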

\begin{proof}
Let $B=\left\lbrace t^{\alpha}{\bf e}_i-t^{\beta}{\bf e}_j\mid i,j\in \{1,\ldots, r\},  i\not=j,  (\alpha,\beta)\in \mathrm R(a_i,a_j) \right\rbrace$. Clearly, $B\subset \ker(\varphi)$. 

Let $\mathbf f= (f_1,\ldots,f_r)\in \ker(\phi)$. We have $\sum_{i=1}^rt^{a_i}f_i=0$. Let $d_i$ be the degree of $f_i$, and assume that $c_it^{d_i}$ is the leading term of $f_i$, $i\in\{1,\ldots,r\}$. As $\sum_{i=1}^rt^{a_i}f_i=0$, there must be $i\in \{2,\ldots,r\}$ and a monomial $kt^s$ of $f_i$ such that $a_1+d_1=a_i+s$ ($s\in S$). Without loss of generality, we may think that $i=2$. 
Thus $a_1+d_1=a_2+s\in (a_1+S)\cap (a_2+S)$, whence $a_1+d_1=a_2+s=\gamma+ s_{12}$ with $\gamma$ a minimal generator of $(a_1+S)\cap (a_2+S)$ and $s_{12}\in S$. Hence $(d_1,s)=(\gamma-a_1+s_{12},\gamma-a_2+s_{12})$. Set $(\alpha,\beta)=(\gamma-a_1,\gamma-a_2)$. Then $(\alpha,\beta)\in \mathrm R(a_1,a_2)$ and $(d_1,s)=(\alpha+s_{12}, \beta+s_{12})$-

We can write $\mathbf f= (f_1,\ldots, f_r)=c_1t^{s_{12}}(t^\alpha \mathbf e_1-t^\beta \mathbf e_2) + \mathbf f'$, with $\mathbf f'= (f_1',f_2',f_3,\ldots,f_r)$, $f_1'=f_1-c_1t^{d_1}$ and $f_2'=f_2+c_1t^{d_1}$. In this way, we have killed the leading term of $f_1$, and $\mathbf f'$ is again in $\ker(\varphi)$. We continue with $\mathbf f'$ until the first component is zero. After that we focus on the second component and so on. We will end up with an expression of the form $\mathbf f^{(n)}=(0,\ldots,0,f^{(n)}_r)\in \ker(\varphi)$. But this leads to $f^{(n)}_r=0$, since otherwise $t^{a_r}f^{(n)}_r$ would not be zero. This concludes the proof.
%
%
%
\end{proof}

\begin{ex}\label{ex:ker} Let $S=\langle 3,4\rangle$ and let $I=(3+S)\cup(5+S)$. 
Let \[\phi: \mathbf A^2\to t^3\KK[t^3,t^4]+t^5\KK[t^3,t^4],\ \phi(f_1,f_2)=t^3f_1+t^5f_2.\] Then $\ker(\phi)$ is generated by $\{(t^6,-t^4),(t^8,-t^6)\}$.
\end{ex}

In light of Theorem \ref{th:kernel}, we can use the following code in \texttt{GAP} (by using the \texttt{numericalsgps} package) to calculate the kernel of $\varphi$. 
\begin{lstlisting}[basicstyle=\ttfamily, tabsize=1, caption={\texttt{R} and \texttt{ker} functions},label=r-ker-code]
R:=function(a,b,s)
	local i, mg;
	i:=IntersectionIdealsOfNumericalSemigroup(a+s,b+s);
	mg:=MinimalGenerators(i);
	return List(mg, m->[m-a,m-b]);
end;

ker:=function(I)
	local mg, s, r, i, j, n;
	s:=AmbientNumericalSemigroupOfIdeal(I);
	mg:=MinimalGenerators(I);
	r:=[];
	n:=Length(mg);
	for i in [1..n] do
		for j in [i+1..n] do
			r:=Union(r,R(mg[i],mg[j],s));
		od;
	od;
	return r;
end;
\end{lstlisting}

Example \ref{ex:ker}, can be calculated as follows.

\begin{verbatim}
gap> s:=NumericalSemigroup(3,4);
<Numerical semigroup with 2 generators>
gap> I:=[3,5]+s;
<Ideal of numerical semigroup>
gap> ker(I);
[ [ 6, 4 ], [ 8, 6 ] ]
\end{verbatim}

\section{Basis of a $\KK$-algebra}\label{bases of algebras}

Let $\KK$ be a field and let $f_1(t),\ldots,f_s(t)\in \KK[t]$. Let $\mathbf A =\KK[f_1,\ldots,f_s]$, which is a subalgebra of $\KK[ t]$. Assume, without loss of generality, that $f_i$ is monic for all $i\in\{1,\ldots,s\}$. Given $f(t)=\sum_{i=0}^pc_it^i\in \mathbf A $, with $c_p\neq 0$, we set $\mathrm{d}(f)=p$ and $\mathrm M(f)=c_pt^p$, the \emph{degree} and \emph{leading monomial} of $f$, respectively. We also define $\mathrm{supp}(f)=\lbrace i \mid c_i\not=0\rbrace$, the \emph{support} of $f$. 

The set $\mathrm{d}(A)=\lbrace \mathrm{d}(f) \mid f\in \mathbf A \rbrace$ is a submonoid of $\NN$. We shall  assume that $\lambda_\mathbf A ({\KK[t]}/\mathbf A )<\infty$. This implies that $\mathrm{d}(A)$ is a numerical semigroup. 
We say that $\lbrace f_1,\ldots,f_s\rbrace$ is a \emph{basis} of $A$ if $\{\mathrm{d}(f_1),\ldots,\mathrm{d}(f_s)\}$ generates $\mathrm{d}(\mathbf A)$. Clearly, $\lbrace f_1,\ldots,f_s\rbrace$ is a basis of $A$ if and only if $\KK[\mathrm M(f) \mid f\in \mathbf A ]=\KK[\mathrm M(f_1),\ldots,\mathrm M(f_s)]$.

\begin{proposicion}\label{remainder} Given $f(t)\in \KK[t]$, there exist $g(t)\in \mathbf A $ and $r(t)\in \KK[t]$ such that the following conditions hold:

\begin{enumerate}

\item $f(t)=g(t)+r(t)$,

\item if $g(t)\not=0$ (respectively $r(t)\not=0$), then $\mathrm{d}(g)\leq \mathrm{d}(f)$ (respectively $\mathrm{d}(r)\leq \mathrm{d}(f)$),

\item If $r(t)\not=0$, then $\mathrm{supp}(r(t))\subseteq \NN\setminus \langle \mathrm{d}(f_1),\ldots,\mathrm{d}(f_s) \rangle$.

\end{enumerate}
\end{proposicion}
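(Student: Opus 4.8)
The plan is to prove this by induction on the degree $\mathrm d(f)$, obtaining $g$ and $r$ as the outputs of a reduction (normal form) procedure in the spirit of a division algorithm: one repeatedly strips the leading monomial $ct^p$ of the current polynomial and sorts it according to the exponent $p$ — if $p$ lies in the submonoid $\langle \mathrm d(f_1),\ldots,\mathrm d(f_s)\rangle$ it is absorbed into the $\mathbf A$-summand, and otherwise it is pushed into the remainder.

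Concretely, I would first dispatch the base cases $f=0$ (take $g=r=0$) and $\mathrm d(f)=0$: a nonzero constant $c$ is handled by $g=c$, $r=0$, using that $0\in\langle \mathrm d(f_1),\ldots,\mathrm d(f_s)\rangle$ since $1\in\mathbf A$. For the inductive step, write $p=\mathrm d(f)$ and $\mathrm M(f)=ct^p$. If $p=\sum_i\lambda_i\mathrm d(f_i)$ with $\lambda_i\in\NN$, set $h=\prod_i f_i^{\lambda_i}\in\mathbf A$; because the $f_i$ are monic, $\mathrm M(h)=t^p$, so $f-ch$ is either $0$ or of degree strictly less than $p$. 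I then apply the induction hypothesis to $f-ch$ and fold $ch$ back into the $\mathbf A$-part. If instead $p\notin\langle \mathrm d(f_1),\ldots,\mathrm d(f_s)\rangle$, I apply the induction hypothesis to $f-ct^p$ (again $0$ or of degree $<p$) and add the monomial $ct^p$ to the remainder. In both branches one checks directly that $f=g+r$, that the degree inequalities in (2) are preserved — the worst cases being $\mathrm d(g)=p$ in the first branch and $\mathrm d(r)=p$ in the second, both $\le\mathrm d(f)$ — and that condition (3) survives, in the second branch precisely because $p$ was excluded from $\langle \mathrm d(f_1),\ldots,\mathrm d(f_s)\rangle$ before being adjoined to $\mathrm{supp}(r)$, and in the first branch because $r$ is inherited unchanged from the induction hypothesis.

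The one genuinely load-bearing hypothesis, and the only place the argument could go wrong, is the monicity of the $f_i$: it is exactly what forces the leading monomial of a product $\prod_i f_i^{\lambda_i}$ to equal $t^{\sum_i\lambda_i\mathrm d(f_i)}$, hence what makes the subtraction $f-ch$ in the first branch strictly lower the degree and so keeps the induction well-founded; over a non-monic generating set the reduction step could stall. Beyond that, everything is routine bookkeeping on degrees and supports, and termination is automatic since each step strictly decreases $\mathrm d$.
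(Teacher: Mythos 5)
Your proposal is correct and follows essentially the same route as the paper: the paper's proof is the same division procedure, phrased as an iteration $f\mapsto f^1\mapsto\cdots\mapsto f^k\in\KK$ with the two branches (leading exponent in or out of $\langle \mathrm{d}(f_1),\ldots,\mathrm{d}(f_s)\rangle$) and the same use of monicity to guarantee $\mathrm M\bigl(\prod_i f_i^{\theta_i}\bigr)=t^p$, whereas you package it as induction on $\mathrm{d}(f)$. The two formulations are equivalent and your bookkeeping of conditions (1)--(3) matches the paper's.
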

\begin{proof} The assertion is clear if $f\in\KK$. Suppose that $f\notin\KK$ and let $f(t)=\sum_{i=0}^pc_it^i$ with $p=\mathrm{d}(f)>0$.

\begin{enumerate}
\item If $p\notin \langle \mathrm{d}(f_1),\ldots,\mathrm{d}(f_s) \rangle$, then we set $g^1=0, r^1=c_pt^p$ and $f^1=f-c_pt^p$.

\item If $p\in \langle \mathrm{d}(f_1),\ldots,\mathrm{d}(f_s) \rangle$, then  $t^p=\mathrm M(f_1)^{\theta_1}\cdots \mathrm M(f_s)^{\theta_s}$, for some $(\theta_1,\ldots,\theta_s)\in{\mathbb N}^s$ (this expression is not necessarily unique). We set
$g^1=c_pf_1^{\theta_1}\cdots  f_s^{\theta_s}$, $r^1=0$ and
$f^1=f-g^1$.
\end{enumerate}
With this choice of $g^1$ and $r^1$, we have $f=f^1+g^1+r^1$, $g^1\in\bf A$ $r^1\in\mathbf K[t]$, and the following conditions hold:

\begin{enumerate}

\item If $r^1\not=0$, then  $\mathrm{supp}(r^1)\subseteq \NN\setminus \langle \mathrm{d}(f_1),\ldots,\mathrm{d}(f_s) \rangle$.
\item  If $f^1\notin \KK$, then $\mathrm{d}(f^1)<\mathrm{d}(f)=p$. 
\end{enumerate}
Then we restart with $f^1$. Clearly there is $k\geq 1$ such that $f^{k}\in \KK$. We set
$g=g^1+\dots+g^k+f^k$ and $r=r^1+\cdots+r^k$. 
\end{proof}

We denote the polynomial $r(t)$ of Proposition \ref{remainder}  by $\mathrm R(f,\lbrace f_1,\ldots,f_s\rbrace)$. Note that this polynomial is not unique.

\begin{proposicion} 
The set $\lbrace f_1,\ldots,f_s\rbrace$ is a basis of ${\bf A}$ if and only if $\mathrm R(f,\lbrace f_1,\ldots,f_s\rbrace)=0$ for all $f\in {\bf A} $.
\end{proposicion}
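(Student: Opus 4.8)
The plan is to prove both implications. For the forward direction, suppose $\{f_1,\ldots,f_s\}$ is a basis of $\mathbf A$, and take any $f\in\mathbf A$. Run the procedure of Proposition~\ref{remainder} on $f$ to obtain $f=g+r$ with $g\in\mathbf A$, $r\in\KK[t]$ and (if $r\neq 0$) $\supp(r)\subseteq\NN\setminus\langle\mathrm d(f_1),\ldots,\mathrm d(f_s)\rangle$. Since $f\in\mathbf A$ and $g\in\mathbf A$, we get $r=f-g\in\mathbf A$, so $\mathrm d(r)\in\mathrm d(\mathbf A)=\langle\mathrm d(f_1),\ldots,\mathrm d(f_s)\rangle$ (using that $\{f_1,\ldots,f_s\}$ is a basis). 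But $\mathrm d(r)\in\supp(r)$, which is disjoint from this numerical semigroup unless $r=0$. Hence $r=\mathrm R(f,\{f_1,\ldots,f_s\})=0$.

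For the converse, assume $\mathrm R(f,\{f_1,\ldots,f_s\})=0$ for every $f\in\mathbf A$. I must show $\mathrm d(\mathbf A)=\langle\mathrm d(f_1),\ldots,\mathrm d(f_s)\rangle$. The inclusion $\supseteq$ is immediate since each $f_i\in\mathbf A$. For $\subseteq$, take $f\in\mathbf A$; then by Proposition~\ref{remainder} we write $f=g+r$ with $r=0$ by hypothesis, so $f=g=c f_1^{\theta_1}\cdots f_s^{\theta_s}+(\text{lower degree terms in }\mathbf A)$ — more precisely, tracing the construction, $g=g^1+\cdots+g^k+f^k$ where each $g^j$ is a scalar times a monomial in the $f_i$'s and $f^k\in\KK$, and the leading monomial $\mathrm M(f)$ equals $\mathrm M(g^1)$, which is $\mathrm M(f_1)^{\theta_1}\cdots\mathrm M(f_s)^{\theta_s}$. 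Therefore $\mathrm d(f)=\theta_1\mathrm d(f_1)+\cdots+\theta_s\mathrm d(f_s)\in\langle\mathrm d(f_1),\ldots,\mathrm d(f_s)\rangle$. This gives the reverse inclusion, hence equality, so $\{f_1,\ldots,f_s\}$ is a basis.

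The main point to be careful about is the bookkeeping in the converse direction: one needs that when $r=0$, the leading monomial of $f$ is genuinely realized as a product of leading monomials of the $f_i$, i.e. that no cancellation occurs at the top degree when we form $g^1=c_p f_1^{\theta_1}\cdots f_s^{\theta_s}$. This is guaranteed because the $f_i$ are monic, so $\mathrm M(f_1^{\theta_1}\cdots f_s^{\theta_s})=t^{\theta_1\mathrm d(f_1)+\cdots+\theta_s\mathrm d(f_s)}=t^p$ by the choice of the $\theta_i$; thus $g^1$ has degree exactly $p$ and leading coefficient $c_p$, matching $\mathrm M(f)$. Everything else is a routine unwinding of Proposition~\ref{remainder}, so I expect no serious obstacle.
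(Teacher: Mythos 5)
Your proof is correct and follows essentially the same route as the paper: the forward direction is the identical contradiction via $r=f-g\in\mathbf A$ with $\supp(r)$ disjoint from $\langle\mathrm d(f_1),\ldots,\mathrm d(f_s)\rangle$, and your converse is just the contrapositive of the paper's one-line observation that $\mathrm d(f)\notin\langle\mathrm d(f_1),\ldots,\mathrm d(f_s)\rangle$ forces the first division step to deposit $c_pt^p$ into the remainder, which no later step can cancel. Your extra bookkeeping about $\mathrm M(f)=\mathrm M(g^1)$ is sound (and slightly more explicit than the paper), so there is nothing to fix.
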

\begin{proof} 
Suppose that $\lbrace f_1,\ldots,f_s\rbrace$ is a basis of ${\bf A}$ and let $f\in \mathbf A$. Let $r(t)=\mathrm R(f,\lbrace f_1,\ldots,f_s\rbrace)$. Then $r(t)\in {\bf A} $. If $r\not=0$, then  $\mathrm{d}(r)\in\langle \mathrm{d}(f_1),\ldots,\mathrm{d}(f_s) \rangle$, because $\lbrace f_1,\ldots,f_r\rbrace$ is a basis, and this is a contradiction.

Conversely, given $0\not=f\in {\bf A} $, if $\mathrm{d}(f)\notin\langle \mathrm{d}(f_1),\ldots,\mathrm{d}(f_s) \rangle$, then $\mathrm R(f,\lbrace f_1,\ldots,f_s\rbrace)\not=0$, which is a contradiction.
\end{proof}

\begin{nota} Suppose that $\lbrace f_1,\ldots,f_s\rbrace$ is a basis of ${\bf A}$. For all $f\in{\bf K}[t]$, $ \mathrm R(f,\lbrace f_1,\ldots,f_s\rbrace)$ is unique. Write $f=g_1+r_1=g_2+r_2$, and suppose that $g_i,r_i$, $i\in\{1,2\}$ satisfy conditions (1), (2) and (3) of Proposition \ref{remainder}. We have $g_1-g_2=r_2-r_1\in {\bf A}$. Hence $\mathrm{d}(r_2-r_1)\in \mathrm{d}(\mathbf A)$, because $\lbrace f_1,\ldots,f_s\rbrace$ is a basis of ${\bf A}$. If $r_1\not= r_2$, then $\mathrm{d}(r_2-r_1)\in \mathrm{supp}(r_1)\cup \mathrm{supp}(r_2)$. Thus by Proposition \ref{remainder}, $\mathrm{d}(r_2-r_1)\in {\mathbb N}\setminus \langle \mathrm{d}(f_1),\ldots,\mathrm{d}(f_s) \rangle= \mathbb N\setminus \mathrm{d}(\mathbf A)$, which is a contradiction.
\end{nota}

Let the notations be as above and let
$$
\phi:\KK[X_1,\ldots,X_s]\longrightarrow\KK[t],\ \phi(X_i)=\mathrm M(f_i), \hbox{ for all } i\in \{1,\ldots,  s\}.$$ 
Let $\lbrace F_1,\ldots,F_r\rbrace$ be a generating system of the kernel of $\phi$. We can choose $F_i$ to be a binomial for all $i\in\{1,\ldots,r\}$. If $F_i=X_1^{\alpha^i_1}\cdots  X_s^{\alpha^i_s}-X_1^{\beta^i_1}\cdots  X_s^{\beta^i_s}$, we set $S_i= f_1^{\alpha^i_1}\cdots  f_s^{\alpha^i_s}-f_1^{\beta^i_1}\cdots  f_s^{\beta^i_s}$. Observe that  if $d=\sum_{k=1}^s\alpha^i_k\mathrm{d}(f_k)=\sum_{k=1}^s\beta^i_k\mathrm{d}(f_k)$, then $\mathrm{d}(S_i)<d$. 

\begin{p}\label{carac-global}
The set $\lbrace f_1,\ldots,f_s\rbrace$ is a basis of ${\bf A}$ if and only if $\mathrm R(S_i,\lbrace f_1,\ldots,f_s\rbrace)=0$ for all $i\in\{1,\ldots,r\}$.
\end{p}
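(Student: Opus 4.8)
The plan is to exploit the standard Buchberger-style criterion: a generating set is a basis (equivalently, the leading monomials $\mathrm M(f_i)$ already generate the leading-monomial algebra of $\mathbf A$) precisely when all the ``S-polynomials'' $S_i$ reduce to zero. One implication is immediate from the previous proposition: if $\{f_1,\ldots,f_s\}$ is a basis, then $S_i\in\mathbf A$ (it is visibly an element of $\mathbf A$), so $\mathrm R(S_i,\{f_1,\ldots,f_s\})=0$ for all $i$. The content is in the converse.

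For the converse, I would argue by contradiction. Suppose $\mathrm R(S_i,\{f_1,\ldots,f_s\})=0$ for all $i\in\{1,\ldots,r\}$ but $\{f_1,\ldots,f_s\}$ is not a basis. Then there exists $0\neq f\in\mathbf A$ with $\mathrm d(f)\notin\langle \mathrm d(f_1),\ldots,\mathrm d(f_s)\rangle$; among all such $f$, choose one of minimal degree $d=\mathrm d(f)$. Since $f\in\mathbf A$, write $f=h(f_1,\ldots,f_s)$ for some polynomial $h\in\KK[X_1,\ldots,X_s]$, and among all such representations pick one minimizing $\delta:=\max\{\sum_k a_k\mathrm d(f_k)\mid X_1^{a_1}\cdots X_s^{a_s}\text{ a monomial of }h\}$. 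We must have $\delta>d$: indeed $\delta\geq d$ always (the degree of a sum is at most the max of the degrees), and if $\delta=d$ then the monomial(s) of $h$ realizing the degree $d$ would express $d$ as an element of $\langle\mathrm d(f_1),\ldots,\mathrm d(f_s)\rangle$, a contradiction. So $\delta>d$, which forces cancellation of the top-degree terms: collecting the monomials $X^{a}=X_1^{a_1}\cdots X_s^{a_s}$ of $h$ with $\sum_k a_k\mathrm d(f_k)=\delta$, the sum of their leading monomials $\phi(X^{a})=\mathrm M(f_1)^{a_1}\cdots\mathrm M(f_s)^{a_s}$ (times the corresponding coefficient) must vanish.

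The key step — and the main obstacle — is then a standard but slightly delicate ``syzygy'' argument: this vanishing top-degree part is a $\KK$-linear combination of elements of $\ker(\phi)$, hence a $\KK[X]$-combination of the binomial generators $F_1,\ldots,F_r$; tracking this through, one rewrites the top part of $h$ as a combination involving the $S_i$. Concretely, one replaces, one offending top monomial at a time, using a relation $F_i$: if $cX^a$ and the matching term sit atop a common monomial $X^a = X^{a'}\cdot X_1^{\alpha_1^i}\cdots X_s^{\alpha_s^i}$ with $F_i=X_1^{\alpha_1^i}\cdots X_s^{\alpha_s^i}-X_1^{\beta_1^i}\cdots X_s^{\beta_s^i}$, then subtracting $c\,X^{a'}F_i$ from $h$ (which does not change $f=h(f_1,\ldots,f_s)$ up to adding $c\,X^{a'}\cdot(\text{lower})$, because by hypothesis $S_i$ has degree $<\delta'$ and moreover $\mathrm R(S_i,\{f_1,\ldots,f_s\})=0$ means $S_i\in\mathbf A$ can be written via the $f_k$ with all monomials of degree $<\sum_k\alpha_k^i\mathrm d(f_k)$) strictly decreases the number of top-degree-$\delta$ monomials, or lowers $\delta$. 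This contradicts the minimality of $\delta$ in our chosen representation of $f$. Hence no such $f$ exists and $\{f_1,\ldots,f_s\}$ is a basis. The care needed is precisely in bookkeeping that each replacement (a) preserves the value $f$, (b) introduces only monomials of degree $\leq\delta$, and (c) strictly decreases the pair $(\delta,\#\{\text{monomials of degree }\delta\})$ in lexicographic order, so that the process terminates; this is where the hypothesis $\mathrm R(S_i,\{f_1,\ldots,f_s\})=0$ (not merely $\mathrm d(S_i)<d$) is essential.
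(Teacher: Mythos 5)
Your proposal is essentially the paper's proof: the necessity is handled identically, and the converse is the same descent on the maximal weighted degree $\delta=\max\{\sum_k a_k\mathrm{d}(f_k)\}$ of a representation of $f$ in the $f_i$, using the cancellation of the top-degree leading monomials to land in $\ker(\phi)$, rewriting via the binomial generators $F_k$, and invoking $\mathrm R(S_k,\lbrace f_1,\ldots,f_s\rbrace)=0$ to re-express everything below degree $\delta$. The one point where your version is weaker than the paper's is the termination bookkeeping: a single move subtracting $c\,X^{a'}F_i$ deletes the top monomial $X^{a'}X_1^{\alpha_1^i}\cdots X_s^{\alpha_s^i}$ but introduces $X^{a'}X_1^{\beta_1^i}\cdots X_s^{\beta_s^i}$, which has the same weighted degree $\delta$, so the pair $(\delta,\#\{\text{monomials of degree }\delta\})$ need not strictly decrease after one move as you claim; the paper avoids this by substituting the entire homogeneous syzygy $\sum_k\lambda_kF_k$ at once, after which every surviving term has weighted degree strictly less than $\delta$ and the descent is immediate.
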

\begin{proof}  Suppose that $\lbrace f_1,\ldots,f_s\rbrace$ is a basis of ${\bf A}$. Since $S_i\in {\bf A}$ for all $i\in\{1,\ldots,r\}$, we trivially obtain $\mathrm R(S_i,\lbrace f_1,\ldots,f_s\rbrace)=0$.

For the sufficiency, assume that there is $f\in {\bf A}$ such that $\mathrm{d}(f)\not\in \langle \mathrm{d}(f_1),\ldots, \mathrm{d}(f_s)\rangle$, and write 
$$
f=\sum\nolimits_{\underline{\theta}}c_{\underline{\theta}}f_1^{\theta_1}\cdots  f_s^{\theta_s}.
$$
For all $\underline{\theta}$, if $c_{\underline{\theta}}\not=0$, we set $p_{\underline{\theta}}=\sum_{i=1}^s\theta_i\mathrm{d}(f_i)=\mathrm{d}(f_1^{\theta_1}\cdots  f_s^{\theta_s})$. Take $p={\rm max}\lbrace p_{\underline{\theta}} \mid c_{\underline{\theta}}\not=0\rbrace$ and let $\lbrace \underline{\theta}^1,\ldots,\underline{\theta}^l\rbrace$ be the set of elements such that $p=\mathrm{d}(f_1^{\theta^i_1}\cdots  f_s^{\theta^i_s})$ for all $i\in\{1,\ldots, l\}$. If $\sum_{i=1}^lc_{\underline{\theta}^i}\mathrm M(f_1^{\theta^i_1}\cdots  f_s^{\theta^i_s})\not=0$, then $p=\mathrm{d}(f)\in \langle \mathrm{d}(f_1),\ldots,\mathrm{d}(f_s) \rangle$, which by assumption is impossible. Hence $\sum_{i=1}^lc_{\underline{\theta}^i}\mathrm M(f_1^{\theta^i_1}\cdots  f_s^{\theta^i_s})=0$, which implies that  $\sum_{i=1}^lc_{\underline{\theta}^i}X_1^{\theta^i_1}\cdots  X_s^{\theta^i_s}\in \ker(\phi)$. Thus 
$$
\sum_{i=1}^lc_{\underline{\theta}^i}X_1^{\theta^i_1}\cdots  X_s^{\theta^i_s}=\sum_{k=1}^r\lambda_kF_k
$$
with $\lambda_k\in\KK[X_1,\ldots,X_s]$ for all $k\in\{1,\ldots,r\}$. This implies that 
$$
\sum_{i=1}^lc_{\underline{\theta}^i}f_1^{\theta^i_1}\cdots  f_s^{\theta^i_s}=\sum_{k=1}^r\lambda_k(f_1,\ldots,f_s)S_k.
$$
By hypothesis, $\mathrm R(S_k,\{f_1,\ldots, f_s\})=0$. So there exists an expression $S_k=\sum_{\underline{\beta}^k}c_{\underline{\beta}^k}f_1^{\beta_1^k}\cdots  f_s^{\beta_s^k}$
with $\mathrm{d}(f_1^{\beta_1^k}\cdots  f_s^{\beta_s^k})\leq \mathrm{d}(S_k)$ for all $\underline{\beta}^k$ such that $c_{\underline{\beta}^k}\not= 0$.  Finally we can write $f=\sum_{\underline{\theta}'}c_{\underline{\theta}'}f_1^{\theta'_1}\cdots  f_s^{\theta'_s}$ with $\max \left\lbrace \mathrm{d}(f_1^{\theta'_1}\cdots  f_s^{\theta'_s})\mid c_{\underline{\theta}'}\not=0\right\rbrace<p$. 

We now restart with the new expression of $f$. This process will stop, yielding a contradiction.
\end{proof}

\begin{alg}
Let the notations be as above.

\begin{enumerate}
\item If $\mathrm R(S_k(f_1,\ldots,f_s),\lbrace f_1,\ldots,f_s\rbrace)=0$ for all $k\in\{1,\ldots,r\}$, then $\lbrace f_1,\ldots,f_s\rbrace$ is a basis of ${\bf A}$.

\item If $r(t)=\mathrm R(S_k(f_1,\ldots,f_s),\lbrace f_1,\ldots,f_s\rbrace)\not= 0$ for some $k\in\{1,\ldots, r\}$,  then we set $f_{s+1}=r(t)$, and we restart with $\lbrace f_1,\ldots,f_{s+1}\rbrace$. \\
Note that in this case, $\langle \mathrm{d}(f_1),\ldots,\mathrm{d}(f_s)\rangle \subsetneq \langle\mathrm{d}(f_1),\ldots,\mathrm{d}(f_s),\mathrm{d}(f_{s+1}) \rangle$. 
\end{enumerate}
This process will stop, giving a basis of ${\bf A}$.
\end{alg}

Suppose that $\lbrace f_1,\ldots,f_s\rbrace$ is a basis of ${\bf A}$. We say that $\lbrace f_1,\ldots,f_s\rbrace$ is  a \emph{minimal basis} of ${\bf A}$ if $\{\mathrm{d}(f_1),\ldots,\mathrm{d}(f_s)\}$ minimally generates the semigroup $\mathrm{d}({\bf A})$. We say that $\lbrace f_1,\ldots,f_s\rbrace$ is a \emph{reduced basis} of ${\bf A}$ if $\mathrm{supp}(f_i-\mathrm M(f_i))\in \NN\setminus \mathrm{d}({\bf A})$ and $f_i$ is monic for all $i\in\{1,\ldots,s\}$.

Let $i\in\{1,\ldots, s\}$. If $\mathrm{d}(f_i)$ is in $\langle \mathrm{d}(f_1),\ldots,\mathrm{d}(f_{i-1}),\mathrm{d}(f_{i+1}),\ldots,\mathrm{d}(f_s) \rangle$, then the set obtained by removing $f_i$, $\lbrace f_1,\ldots,f_{i-1},f_{i+1},\ldots,f_s\rbrace$, is also a basis of ${\bf A}$.
Furthermore, by applying the division process of Proposition \ref{remainder} to $f_i-\mathrm M(f_i)$, we can always construct a reduced basis of ${\bf A}$. 

\begin{corolario} 
Up to constants, the algebra ${\bf A}$ has a unique minimal reduced  basis.
\end{corolario}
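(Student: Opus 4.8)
The plan is to deduce uniqueness from two facts: a numerical semigroup has a unique minimal system of generators, and the \emph{reduced} condition rigidifies the lower-order terms of a basis element once its leading monomial is fixed.

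Existence is already contained in the discussion preceding the corollary: starting from any basis, one repeatedly removes an $f_i$ whose degree lies in $\langle \mathrm{d}(f_j) : j\neq i\rangle$ to reach a minimal basis, and then applies the division process of Proposition~\ref{remainder} to each $f_i-\mathrm M(f_i)$, replacing $f_i$ by a monic polynomial with the same leading monomial and with remaining support inside $\NN\setminus\mathrm d(\mathbf A)$. Since these replacements leave every $\mathrm d(f_i)$ unchanged, minimality is preserved, so $\mathbf A$ admits a minimal reduced basis.

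For uniqueness I would take two minimal reduced bases $\{f_1,\ldots,f_s\}$ and $\{g_1,\ldots,g_{s'}\}$ of $\mathbf A$. Since both are minimal, $\{\mathrm d(f_1),\ldots,\mathrm d(f_s)\}$ and $\{\mathrm d(g_1),\ldots,\mathrm d(g_{s'})\}$ are each equal to the unique minimal generating set of the numerical semigroup $\mathrm d(\mathbf A)$; hence $s=s'$ and, after reordering, $\mathrm d(f_i)=\mathrm d(g_i)$ for every $i$. Fix $i$ and put $h=f_i-g_i\in\mathbf A$. Both $f_i$ and $g_i$ are monic of degree $\mathrm d(f_i)$, so $\mathrm M(f_i)=\mathrm M(g_i)$ and therefore $h=(f_i-\mathrm M(f_i))-(g_i-\mathrm M(g_i))$, whose support lies in $\mathrm{supp}(f_i-\mathrm M(f_i))\cup\mathrm{supp}(g_i-\mathrm M(g_i))\subseteq\NN\setminus\mathrm d(\mathbf A)$ because both bases are reduced. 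If $h\neq 0$, then $\mathrm d(h)\in\mathrm{supp}(h)\subseteq\NN\setminus\mathrm d(\mathbf A)$, contradicting $\mathrm d(h)\in\mathrm d(\mathbf A)$ (as $h$ is a nonzero element of $\mathbf A$). Hence $h=0$, i.e. $f_i=g_i$, and the two bases agree up to the order of their elements (and, if one does not insist on the monic normalisation in the definition of a reduced basis, up to multiplication of each element by a nonzero scalar of $\KK$) — which is the claim.

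I expect no serious obstacle here; the one point requiring care is to use the reducedness of \emph{both} bases simultaneously, so that cancelling the common leading monomials leaves a polynomial whose entire support avoids $\mathrm d(\mathbf A)$. That is precisely what upgrades ``equal degrees'' to ``equal polynomials.'' The only external input is the uniqueness of the minimal generating set of a numerical semigroup, which is what justifies $s=s'$ and the identification $\mathrm d(f_i)=\mathrm d(g_i)$ above.
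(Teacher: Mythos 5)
Your proposal is correct and follows essentially the same route as the paper: match the degrees via the uniqueness of the minimal generating set of the numerical semigroup $\mathrm d(\mathbf A)$, then use reducedness of both bases to place $\mathrm{supp}(f_i-g_i)$ inside $\NN\setminus\mathrm d(\mathbf A)$, contradicting $f_i-g_i\in\mathbf A$ unless the difference vanishes. Your version is in fact slightly cleaner at the final step (noting that $0\in\mathrm d(\mathbf A)$ already rules out a nonzero constant difference, where the paper only concludes $f_i-g_{j_i}\in\KK$), and you also spell out the existence half that the paper leaves to the preceding discussion.
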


\begin{proof} Let $\lbrace f_1,\ldots,f_s\rbrace$ and $\lbrace g_1,\ldots,g_{s'}\rbrace$ be two minimal reduced bases of ${\mathbf A}$. Clearly $s=s'$, and equals the embedding dimension of $\mathrm{d}({\mathbf A})$. Let $i=1$. There exists $j_1$ such that $\mathrm{d}(f_1)=\mathrm{d}(g_{j_1})$, because minimal generating systems of a numerical semigroup are unique. 

Observe that $\supp(f_1-g_{j_1})\subseteq \supp(f_1-\mathrm M(g_{j_1}))=\supp(f_1-\mathrm M(f_1))\subseteq \mathbb N\setminus \mathrm{d}({\mathbf A})$. 
Thus, if $f_1-g_{j_1}\notin \KK\setminus\{0\}$, then $\mathrm{d}(f_1-g_{j_1})\notin \mathrm{d}({\mathbf A})$, which is a contradiction because $f_1-g_{j_1}\in {\mathbf A} $. The same argument shows that for all $i\geq 2$, there exists $j_i$ such that $f_i-g_{j_i}\in\KK$.
\end{proof}

\begin{corolario} Let $\lbrace f_1,\ldots,f_s\rbrace$ be a reduced basis of $A$. For all $i\in\{1,\ldots,s\}$,  $\mathrm{supp}(f_i- \mathrm M(f_i))\subseteq  \mathrm G(d({\mathbf A})$. 
\end{corolario}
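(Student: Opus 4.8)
The plan is to read the statement off directly from the definition of a reduced basis, once two elementary observations are in place. The first is that $\NN\setminus\mathrm{d}(\mathbf{A})$ is precisely the set of gaps $\mathrm G(\mathrm{d}(\mathbf{A}))$: this is immediate, since throughout Section~\ref{bases of algebras} we assume $\lambda_{\mathbf A}(\KK[t]/\mathbf{A})<\infty$, which forces $\mathrm{d}(\mathbf{A})$ to be a numerical semigroup, and by definition $\mathrm G(S)=\NN\setminus S$ for any numerical semigroup $S$. The second is that, since $\{f_1,\ldots,f_s\}$ is a basis of $\mathbf A$, we have $\langle\mathrm{d}(f_1),\ldots,\mathrm{d}(f_s)\rangle=\mathrm{d}(\mathbf A)$, so the set appearing in condition (3) of Proposition~\ref{remainder} is exactly $\NN\setminus\mathrm{d}(\mathbf A)$.

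With this in hand, fix $i\in\{1,\ldots,s\}$. By the definition of a reduced basis, $f_i$ is monic and $\mathrm{supp}(f_i-\mathrm M(f_i))\subseteq\NN\setminus\mathrm{d}(\mathbf A)$. Combining this with the first observation gives $\mathrm{supp}(f_i-\mathrm M(f_i))\subseteq\NN\setminus\mathrm{d}(\mathbf A)=\mathrm G(\mathrm{d}(\mathbf A))$, which is the assertion. If one prefers to argue from the construction of a reduced basis rather than from its definition, one replaces $f_i$ by $\mathrm M(f_i)+\mathrm R\bigl(f_i-\mathrm M(f_i),\{f_1,\ldots,f_s\}\bigr)$ and invokes condition (3) of Proposition~\ref{remainder} together with the second observation above; the conclusion is the same.

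There is essentially no obstacle here, and the result is a piece of bookkeeping that records a convenient reformulation in the language of gaps. The only point worth a moment's thought is that $0\notin\mathrm{supp}(f_i-\mathrm M(f_i))$, i.e.\ the tail of $f_i$ carries no constant term. This is automatic: $0\in\mathrm{d}(\mathbf A)$, being the degree of any nonzero constant of $\mathbf A$, hence $0\notin\NN\setminus\mathrm{d}(\mathbf A)$, so the reduced-basis condition already rules out a constant term and nothing further needs to be verified.
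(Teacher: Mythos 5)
Your proof is correct and matches the paper's (implicit) reasoning: the corollary is an immediate restatement of the reduced-basis condition $\mathrm{supp}(f_i-\mathrm M(f_i))\subseteq\NN\setminus\mathrm{d}(\mathbf A)$ using the definition $\mathrm G(S)=\NN\setminus S$, which is why the paper states it without proof. Your observations about $\mathrm{d}(\mathbf A)$ being a numerical semigroup and about the absence of a constant term are accurate but not needed beyond what the definition already supplies.
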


\begin{ex}\label{ex-1}
We compute $\mathrm{d}({\bf A})$ for ${\bf A}=\KK[t^6+t,t^4]$; $f_1=t^6+t$ and $f_2=t^4$. We start by computing the kernel of  $\phi:\KK[X_1,X_2]\to \KK[t]$, with $\phi(X_1)=t^6$ and $\phi(X_2)=t^4$. This kernel is generated by $F_1=X_2^3-X_1^2$. Hence $S_1=2t^7+t^2$. Since $7\notin \langle 4,6\rangle$, then we add $f_3=2t^7+t^2$ to our generating set. 

In the next step $\phi:\KK[X_1,X_2,X_3]\to \KK[t]$, with $\phi(X_1)=t^6$, $\phi(X_2)=t^4$ and $\phi(X_3)=2t^7$; $\ker\phi=(X_2^3-X_1^2, X_3^2-4X_1X_2^2)$, whence $S_1=f_3$ and $S_2=f_3^2-4f_1f_2^2=t^4=f_2$. It turns out that $\mathrm R(S_1,\{f_1,f_2,f_3\})=0=\mathrm R(S_2,\{f_1,f_2,f_3\})$, and consequently $\{f_1,f_2,f_3\}$ is a (reduced minimal) basis of ${\bf A}$. Also $\mathrm{d}({\bf A})=\langle 4,6,7\rangle$.

These computations can be performed with the \texttt{numericalsgps} \texttt{GAP} package.
\begin{verbatim}
gap> SemigroupOfValuesOfCurve_Global([t^6+t,t^4],"basis");
[ t^4, t^6+t, t^7+1/2*t^2 ]
\end{verbatim}
Or if we just want to calculate $\mathrm{d}(A)$:
\begin{verbatim}
gap> s:=SemigroupOfValuesOfCurve_Global([t^6+t,t^4]);;              
gap> MinimalGenerators(s);
[ 4, 6, 7 ]
\end{verbatim}
\end{ex}

\section{Modules over ${\bf K}$-algebras}

Let the notations be as in Section \ref{bases of algebras}. In particular $\lbrace f_1,\ldots,f_s\rbrace$ is a set of polynomials of $\KK[t]$ and ${\bf A}=\KK[f_1,\ldots,f_s]$. Let $\lbrace F_1,\ldots,F_r\rbrace$ be a set of nonzero elements of $\KK[t]$, and let ${\bf M}=\sum_{i=1}^rF_i{\bf A}$ be the ${\bf A}$-module generated by $F_1,\ldots,F_r$. We set $\mathrm{d}({\bf M})=\lbrace \mathrm{d}(F),F\in{\bf M}\setminus 0\rbrace$. 

If $F\in{\bf M}$ and $g\in {\bf A}$ then $gF\in{\bf M}$, hence $\mathrm{d}({\bf M})$ is a relative ideal of $\mathrm{d}({\bf A})$. 

\begin{definicion} We say that $\lbrace F_1,\ldots,F_r\rbrace$ is a basis of $\mathbf M$ if and only if $\mathrm{d}(M)=\bigcup_{i=1}^r (\mathrm{d}(F_i)+\mathrm{d}(\mathbf A))$. Equivalently, $\lbrace F_1,\ldots,F_r\rbrace$ is a basis of $\bf M$ if and only if $\lbrace \mathrm{d}(F_1),\ldots,\mathrm{d}(F_r)\rbrace$ is a basis of the ideal $\mathrm{d}(\mathbf M)$ of $\mathrm{d}(\mathbf A)$.
\end{definicion}

\begin{teorema} \label{division-ideal}
Let $\lbrace f_1,\ldots,f_s,F_1,\ldots, F_r\rbrace$ be a set of nonzero polynomials of $\KK[t]$. Let ${\bf A}=\KK[f_1,\ldots,f_s]$ and ${\bf M}$ be the ${\bf A}$-module generated by $\{F_1,\ldots,F_r\}$. Given $F\in{\KK}[t]$, $F\not=0$, there exist $g_1,\ldots,g_r\in {\bf A}$ and $R\in \mathbf K[t]$ such that the following conditions hold.

\begin{enumerate}[(1)]

\item $F=\sum_{i=1}^rg_iF_i+R$.

\item For all $i\in\lbrace 1,\ldots,r\rbrace$, if $g_i\not=0$, then $\mathrm{d}(g_i)+\mathrm{d}(F_i)\leq \mathrm{d}(F)$.

\item If $R\not=0$, then $\mathrm{d}(R)\leq\mathrm{d}(F)$ and $\mathrm{d}(R)\in {\mathbb N}\setminus\bigcup_{i=1}^r(\mathrm{d}(F_i)+\mathrm{d}(\mathbf A))$.
\end{enumerate}
\end{teorema}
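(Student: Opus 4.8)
The plan is to mimic the proof of Proposition~\ref{remainder} almost verbatim, replacing the algebra $\mathbf A$ with the module $\mathbf M$ and the semigroup $\mathrm d(\mathbf A)$ with the ideal $\bigcup_{i=1}^r(\mathrm d(F_i)+\mathrm d(\mathbf A))$. First I would dispose of trivial cases: if $F=0$ there is nothing to prove (the hypothesis $F\neq 0$ is assumed), and if $\mathrm d(F)\notin\bigcup_{i=1}^r(\mathrm d(F_i)+\mathrm d(\mathbf A))$, I would simply strip off the leading monomial $\mathrm M(F)=c t^{\mathrm d(F)}$, putting it into $R$, and recurse on $F-ct^{\mathrm d(F)}$, which has strictly smaller degree. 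The substantive case is when $\mathrm d(F)\in\bigcup_{i=1}^r(\mathrm d(F_i)+\mathrm d(\mathbf A))$, i.e.\ $\mathrm d(F)=\mathrm d(F_i)+d$ for some $i$ and some $d\in\mathrm d(\mathbf A)$.

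In that case the key sub-step is to realize $d$ as a degree in $\mathbf A$: since $d\in\mathrm d(\mathbf A)=\langle\mathrm d(f_1),\ldots,\mathrm d(f_s)\rangle$, write $d=\sum_{k=1}^s\theta_k\mathrm d(f_k)$, so that $h:=f_1^{\theta_1}\cdots f_s^{\theta_s}\in\mathbf A$ has $\mathrm d(h)=d$ and leading monomial $t^d$ (recall the $f_k$ are monic). Then $F_ih\in\mathbf M$ has degree exactly $\mathrm d(F_i)+d=\mathrm d(F)$, and its leading monomial is $(\text{leading coeff of }F_i)\,t^{\mathrm d(F)}$. Scaling $h$ by the appropriate constant $c$, I form $F^{1}=F-c\,h F_i$, which lies in $\mathbf K[t]$, has $\mathrm d(F^1)<\mathrm d(F)$ (the leading terms cancel), and I record the contribution $c\,h$ to $g_i$. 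I then restart the whole procedure with $F^1$ in place of $F$, accumulating the $g_i$'s and the remainder pieces. Because the degree strictly drops at every step, after finitely many iterations we reach a constant (or zero); termination is therefore immediate by descent on $\mathrm d(F)$.

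Finally I would collect the bookkeeping: summing all the $h F_i$-contributions gives each $g_i\in\mathbf A$ (a sum of elements of $\mathbf A$), summing the stripped leading monomials gives $R\in\mathbf K[t]$, and $F=\sum_i g_iF_i+R$ by construction, establishing~(1). For~(2): each time we subtract $c\,hF_i$ the quantity $\mathrm d(h)+\mathrm d(F_i)$ equals the current degree, which is $\le\mathrm d(F)$ since degrees only decrease; as distinct monomials of a fixed $g_i$ have distinct degrees, $\mathrm d(g_i)+\mathrm d(F_i)\le\mathrm d(F)$ follows. For~(3): every monomial put into $R$ was a leading monomial of some intermediate polynomial whose degree was not in $\bigcup_{i=1}^r(\mathrm d(F_i)+\mathrm d(\mathbf A))$ and was $\le\mathrm d(F)$, so $\supp(R)\subseteq\NN\setminus\bigcup_{i=1}^r(\mathrm d(F_i)+\mathrm d(\mathbf A))$ and in particular $\mathrm d(R)$ lies in that complement with $\mathrm d(R)\le\mathrm d(F)$.

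I do not expect a serious obstacle here — the argument is a routine division/reduction scheme. The only point requiring a little care is the ambiguity of the choice of $i$ and of the exponent vector $(\theta_1,\ldots,\theta_s)$ when $\mathrm d(F)$ decomposes in more than one way: as with $\mathrm R(f,\{f_1,\ldots,f_s\})$ in Proposition~\ref{remainder}, the output $(g_1,\ldots,g_r,R)$ is not canonical, so the statement only claims existence, and any consistent choice rule suffices to make the recursion well-defined and terminating.
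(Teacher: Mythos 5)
Your proposal is correct and follows essentially the same division-with-remainder scheme as the paper's own proof: case split on whether the current leading degree lies in $\bigcup_{i=1}^r(\mathrm{d}(F_i)+\mathrm{d}(\mathbf A))$, subtract either the leading monomial (into $R$) or a suitable $c\,hF_i$ with $h$ a monic product of the $f_k$'s, and terminate by descent on the degree. The only difference is that you make explicit how to realize the required element of $\mathbf A$ with prescribed leading monomial, which the paper leaves implicit.
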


\begin{proof} The assertion is clear if $F\in\KK$. Suppose that $F\notin\KK$ and let $F(t)=\sum_{i=0}^pc_it^i$ with $p=\mathrm{d}(f)>0$.

In order to simplify notation, set $S=\mathrm{d}(\mathbf A)$ and $I=\bigcup_{i=1}^r (\mathrm{d}(F_i)+S)$.

\begin{enumerate}[(i)]

\item If $p\notin I$, then we set $g^1=\dots=g^r=0$,  $R^1=c_pt^p$ and $F^1=F-R^1$.

\item If $p\in I$, then  $c_pt^p=c_{\theta_i}t^{s_i}\mathrm M(F_i)$ for some $s_i\in S$ and some $i\in\lbrace 1,\ldots,r\rbrace$. Let $g\in{\bf A}$ such that $\mathrm M(g)=c_{\theta_i}t^{s_i}$. We set $g_i^1=g$, $g_j^1=0$ for all $j\not=i$, $R^1=0$ and $F^1=F-gF_i$.
\end{enumerate}

In this way, $F=F^1+\sum_{i=1}^rg_i^1F_i+R^1$, and the following conditions hold:

\begin{enumerate}

\item  $g_i^1\in\bf A$ for all $i\in\lbrace 1,\ldots,r\rbrace$.

\item If $R^1\not=0$, then $\mathrm{supp}(R^1)\subseteq \NN\setminus I$.

\item If $F^1\notin \KK$, then $\mathrm{d}(F^1)<\mathrm{d}(F)=p$. 
\end{enumerate}
Then we restart with $F^1$. Clearly there is $k\geq 1$ such that $F^{k}\in \KK$. We set
$g_i=g_i^1+\dots+g_i^k$ for all $i\in\lbrace 1,\ldots,r\rbrace$, and $R=R^1+\cdots+R^k+F^k$.
\end{proof}
We denote the polynomial $R$ of Theorem \ref{division-ideal} by $\mathrm R_{\bf A}(F,\lbrace F_1,\ldots,F_r\rbrace)$.

The following \texttt{GAP} code can compute $\mathrm R_{\mathbf A}(f,\{F_1,\ldots,F_r\})$. Here \texttt{A} contains a basis of the algebra ${\bf A}$, and \texttt{M}  is $\{F_1,\ldots, F_r\}$.

\begin{lstlisting}[basicstyle=\ttfamily, tabsize=2, label=reduce-code, caption={\texttt{reduce} function}, language=gap]
reduce:=function(A,M,f)
	local gens,geni,cand,d, fact, c, r, s,a;
	gens:=List(A, DegreeOfLaurentPolynomial);
	s:=NumericalSemigroup(gens);
	geni:=List(M,DegreeOfLaurentPolynomial);
	if IsZero(f) then
		return f;
	fi;
	d:=DegreeOfLaurentPolynomial(f);
	c:=First([1..Length(geni)], i->d-geni[i] in s);
	r:=f;
	while c<>fail do
		fact:=FactorizationsIntegerWRTList(d-geni[c],gens);
		a:=M[c]*Product(List([1..Length(gens)],i->A[i]^fact[1][i]));
		r:=r-LeadingCoefficient(r)*a/LeadingCoefficient(a);
		if IsZero(r) then
			return r;
		fi;
		d:=DegreeOfLaurentPolynomial(r);
		c:=First([1..Length(geni)], i->d-geni[i] in s);
	od;
	return r/LeadingCoefficient(r);
end;
\end{lstlisting}

\begin{proposicion}\label{caracterization} Let the notations be as in Theorem \ref{division-ideal}. The following conditions are equivalent:

\begin{enumerate}
\item $\lbrace F_1,\ldots,F_r\rbrace$ is a basis of  ${\bf M}$.

\item For all $F\in{\bf M}$, $\mathrm R_{\bf A}(F,\lbrace F_1,\ldots,F_r\rbrace)=0$.
\end{enumerate}
\end{proposicion}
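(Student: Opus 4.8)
The plan is to mirror the structure of the earlier characterization of algebra bases (Proposition~\ref{carac-global} and the surrounding results), adapting the argument from the algebra setting to the module setting. The implication $(1)\Rightarrow(2)$ should be immediate: if $\{F_1,\ldots,F_r\}$ is a basis of $\mathbf M$ and $F\in\mathbf M$ is nonzero, then $\mathrm d(F)\in\mathrm d(\mathbf M)=\bigcup_{i=1}^r(\mathrm d(F_i)+\mathrm d(\mathbf A))=I$; but Theorem~\ref{division-ideal}(3) forces $\mathrm d(R)\in\NN\setminus I$ whenever $R\ne0$, so $R$ must vanish. The only thing to note is that $R=F-\sum g_iF_i$ lies in $\mathbf M$, so if it were nonzero its degree would be in $\mathrm d(\mathbf M)=I$, contradicting Theorem~\ref{division-ideal}(3).

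The substantive direction is $(2)\Rightarrow(1)$, and I would argue by contradiction in the same style as Proposition~\ref{carac-global}. Suppose $\{F_1,\ldots,F_r\}$ is not a basis of $\mathbf M$, i.e. $\mathrm d(\mathbf M)\ne I$. Since $I\subseteq\mathrm d(\mathbf M)$ always holds, there is some $F\in\mathbf M$, $F\ne0$, with $\mathrm d(F)\notin I$. Write $F=\sum_{i=1}^r h_iF_i$ with $h_i\in\mathbf A$, and expand each $h_i$ in the algebra generators so that $F=\sum_i\bigl(\sum_{\underline\theta}c_{i,\underline\theta}f_1^{\theta_1}\cdots f_s^{\theta_s}\bigr)F_i$. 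For each term set $p_{i,\underline\theta}=\mathrm d(F_i)+\sum_k\theta_k\mathrm d(f_k)$, and let $p$ be the maximum of these over nonzero coefficients. If the leading monomials of the top-degree terms did not cancel, then $\mathrm d(F)=p\in I$, contradiction; hence $\sum_{(i,\underline\theta):\,p_{i,\underline\theta}=p}c_{i,\underline\theta}\,\mathrm M(F_i)\mathrm M(f_1)^{\theta_1}\cdots \mathrm M(f_s)^{\theta_s}=0$. This is a syzygy among the leading monomials: the corresponding sum $\sum c_{i,\underline\theta}\,t^{\mathrm d(F_i)}X_1^{\theta_1}\cdots X_s^{\theta_s}$ lies in the kernel of the map $\mathbf A^r\to\KK[t]$, $\mathbf e_i\mapsto\mathrm M(F_i)$, which can be described via Theorem~\ref{th:kernel}-type relators together with the algebra relators $S_k$.

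The key step is then to use a reduction argument to lower $p$. Using hypothesis~(2) — that $\mathrm R_{\mathbf A}(F_iF_j'-F_jF_i',\{F_1,\ldots,F_r\})=0$ for the appropriate binomial ``S-polynomial''-type expressions built from the relators $\mathrm R(\mathrm d(F_i),\mathrm d(F_j))$, and also that the algebra basis relators $S_k$ reduce to zero — one rewrites the top-degree part of $F$ so that, after substitution, every term of the new expression of $F$ has degree strictly less than $p$. Iterating, the degree bound on the expressions strictly decreases, so the process terminates, and at termination we obtain an expression for $F$ all of whose terms have degree in $I$, so $\mathrm d(F)\in I$, contradicting the choice of $F$. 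I expect the main obstacle to be bookkeeping: making precise that condition~(2), stated for all $F\in\mathbf M$, already implies the vanishing of the reductions of the finitely many generating relators (of $\ker\phi$ for the module, in the sense of Theorem~\ref{th:kernel}) and combining them cleanly with the algebra-level relators $S_k$, so that the cancellation of leading monomials can genuinely be absorbed into a strictly-lower-degree rewriting. This is where one must be careful that mixing module relators and algebra relators does not reintroduce terms of degree $p$.
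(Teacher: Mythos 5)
Your proof of $(1)\Rightarrow(2)$ is correct and is exactly the paper's argument: a nonzero remainder would be an element of $\mathbf M$ whose degree lies outside $\mathrm d(\mathbf M)$, which is absurd.

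For $(2)\Rightarrow(1)$ you have taken a long detour and left the decisive step unfinished. Condition (2) here is quantified over \emph{all} of $\mathbf M$, so you may apply it directly to the offending element: if $F\in\mathbf M$ has $\mathrm d(F)\notin I=\bigcup_{i=1}^r(\mathrm d(F_i)+\mathrm d(\mathbf A))$, then the division of Theorem~\ref{division-ideal} puts the leading term $c_pt^p$ straight into the remainder (case (i) of its construction), and every later contribution to $R$ has degree $<p$, so $R\neq 0$ --- contradiction. Equivalently: if $R=0$ then $F=\sum_i g_iF_i$ with $\mathrm d(g_i)+\mathrm d(F_i)\le\mathrm d(F)$ whenever $g_i\neq0$ (condition (2) of Theorem~\ref{division-ideal}), which forces $\mathrm d(F)=\mathrm d(g_{i_0})+\mathrm d(F_{i_0})\in I$ for some $i_0$. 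That is the whole proof; this is what the paper does. The syzygy/S-polynomial descent you sketch is the machinery needed for the genuinely harder Theorem~\ref{S-caracterization}, where the hypothesis is only that finitely many S-polynomials reduce to zero; invoking it here both weakens your write-up (you explicitly leave the rewriting step --- that the module relators and algebra relators can be combined without reintroducing degree-$p$ terms --- as an ``expected obstacle'' rather than proving it) and obscures the fact that no rewriting is needed when (2) is assumed for every element of $\mathbf M$. As it stands, the second direction is not a complete proof: either supply the direct argument above, or fully carry out the descent (in which case you will have reproved the stronger theorem, not just this proposition).
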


\begin{proof} 
Suppose that $\lbrace F_1,\ldots,F_r\rbrace$ is a basis of ${\bf M}$ and let $F\in {\bf M}$. If $R=\mathrm R_{\bf A}(F,\lbrace F_1,\ldots,F_r\rbrace)\not=0$, then $\mathrm{d}(R)\in {\mathbb N}\setminus \bigcup_{i=1}^r(\mathrm{d}(F_i)+\mathrm{d}(\mathbf A))=\mathbb N\setminus \mathrm{d}(\mathbf M)$. But $R\in{\bf M}$. This is a contradiction.

Conversely suppose that $\mathrm R_{\bf A}(F,\lbrace F_1,\ldots,F_r\rbrace)=0$ for all $F\in{\bf M}$. Take $F\in{\bf M}$. If $\mathrm{d}(F) \in {\mathbb N}\setminus \bigcup_{i=1}^r(\mathrm{d}(F_i)+\mathrm{d}(\mathbf A))$, then by construction, $\mathrm R_{\bf A}(F,\lbrace F_1,\ldots,F_r\rbrace)\not=0$, which is a contradiction.
\end{proof}



Let $F_1,\ldots,F_r\in\KK[t]$ and assume, without loss of generality, that $F_1,\ldots,F_r$ are monic. Assume also that $\{f_1,\ldots,f_s\}$ is a reduced basis for $\mathbf A$. Let $a_i$ be such that $\mathrm M(F_i)=t^{a_i}$ for all $i\in\lbrace 1,\ldots,r\rbrace$. Let $(s_i,s_j)\in \mathrm R(a_i,a_j)$. Then $s_i,s_j\in \mathrm{d}(\mathbf A)$. Then $s_i=\sum_{l=1}^s e_{i_l}\mathrm{d}(f_l)$ and  $s_j=\sum_{l=1}^s e_{j_l}\mathrm{d}(f_l)$, for some $e_{i_l}, e_{j_l}\in \mathbb N$. Let $g_i=\prod_{l=1}^s f_l^{e_{i_l}}$, $g_j=\prod_{l=1}^s f_l^{c_{j_l}} \in {\bf A}$. Note that these polynomials may not be unique, there are as many as factorizations of $s_i$ and $s_j$, but this amount is finite. Then $\mathrm{d}(g_i)=s_i$ and $\mathrm{d}(g_j)=s_j$, and also $\mathrm M(g_i)=t^{s_i}$ and $\mathrm M(g_j)=t^{s_j}$ (recall that $f_l$ is monic for all $l$).  We have $t^{s_i}\mathrm M(F_i)-t^{s_j} \mathrm M(F_j)=0$, whence $t^{s_i}{\bf e}_i-t^{s_j}{\bf e}_j\in \ker(\phi)$ with $\phi: {\bf A}^r\to {\bf M}$, $\phi(p_1,\ldots,p_r)=\sum_{i=1}^rp_i\mathrm M(F_i)$. If 
$$
F=g_iF_i-g_jF_j,
$$
then $\mathrm{d}(F)<a_i+s_i=a_j+s_j$. We call $F$ an $S$-polynomial of $(F_1,\ldots,F_r)$. Every element of $\ker(\phi)$ gives rise to an $S$-polynomial. Let $\mathrm{SP}(F_1,\ldots,F_r)$ be the set of $S$-polynomials of $(F_1,\ldots,F_r)$ constructed this way. The set $\mathrm{SP}(F_1,\ldots,F_r)$ has finitely many elements, though for our purposes it will be enough to choose a finite subset of $\mathrm{SP}(F_1,\ldots,F_r)$. 

Let $n\in \mathrm{d}(A)$. The set $\mathsf Z(n)=\{ (n_1,\ldots,n_s) \in \mathbb N^s\mid n=\sum_{i=1}^s n_i\mathrm{d}(g_i)\}$ has finitely many elements (usually known as the set of factorizations of $n$). Let $\preceq_{\mathsf{lex}}$ denote the lexicographical ordering in $\mathbb N^s$. We will consider $\mathrm{MinSP}(F_1,\ldots, F_r)$ the set of all elements $g_iF_i-g_jF_j\in \mathrm{SP}(F_1,\ldots,F_r)$ such that, with the above notation, $g_i=\prod_{l=1}^s f_l^{e_{i_l}}$ and $g_j=\prod_{l=1}^s f_l^{c_{j_l}}$ with $(e_{i_1},\ldots,e_{i_s})=\min_{\preceq_{\mathsf{lex}}}(\mathsf Z(\mathrm{d}(g_i))$ and $(e_{j_1},\ldots,e_{j_s})=\min_{\preceq_{\mathsf{lex}}}(\mathsf Z(\mathrm{d}(g_j))$.

In Theorem \ref{S-caracterization} we give a characterization for a set $\lbrace F_1,\ldots,F_r\rbrace$ of ${\bf M}$ to be a basis of ${\bf M}$ in terms of $\mathrm{MinSP}(F_1,\ldots,F_r)$. 





\begin{teorema} \label{S-caracterization} Let $\lbrace f_1,\ldots,f_s,F_1,\ldots, F_r\rbrace$ be a set of nonzero polynomials of $\KK[t]$. Let ${\bf A}=\KK[f_1,\ldots,f_s]$ and ${\bf M}$ be the ${\bf A}$-module generated by $\{F_1,\ldots,F_r\}$. The following conditions are equivalent:

\begin{enumerate}

 \item $\lbrace F_1,\ldots,F_r\rbrace$ is a basis of ${\bf M}$,

\item For all $F\in   \mathrm{MinSP}(F_1,\ldots,F_r)$, $\mathrm R_{\bf A}(F,\lbrace F_1,\ldots,F_r\rbrace)=0$.

\end{enumerate}
 
\end{teorema}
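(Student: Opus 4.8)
The plan is to adapt the proof of Theorem~\ref{carac-global}, replacing the algebra homomorphism onto leading monomials by the analogous map for $\mathbf M$ and the binomials $S_i$ by the $S$-polynomials collected in $\mathrm{MinSP}(F_1,\ldots,F_r)$. The implication $(1)\Rightarrow(2)$ is immediate: every element of $\mathrm{MinSP}(F_1,\ldots,F_r)$ is by construction of the form $g_iF_i-g_jF_j$ and therefore lies in $\mathbf M$, so if $\lbrace F_1,\ldots,F_r\rbrace$ is a basis of $\mathbf M$, Proposition~\ref{caracterization} gives $\mathrm R_{\mathbf A}(F,\lbrace F_1,\ldots,F_r\rbrace)=0$ for each such $F$.

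For $(2)\Rightarrow(1)$ I would argue by contradiction with a descent on degrees. Assume $\mathrm R_{\mathbf A}(F,\lbrace F_1,\ldots,F_r\rbrace)=0$ for all $F\in\mathrm{MinSP}(F_1,\ldots,F_r)$ but that $\lbrace F_1,\ldots,F_r\rbrace$ is not a basis. By definition $\bigcup_{i=1}^r(\mathrm d(F_i)+\mathrm d(\mathbf A))\subsetneq\mathrm d(\mathbf M)$, so there is $F\in\mathbf M$ with $\mathrm d(F)\notin\bigcup_{i=1}^r(\mathrm d(F_i)+\mathrm d(\mathbf A))$. Fix an expression $F=\sum_{i=1}^rg_iF_i$ with $g_i\in\mathbf A$, set $p=\max\lbrace\mathrm d(g_i)+a_i\mid g_i\neq 0\rbrace$ (where $\mathrm M(F_i)=t^{a_i}$) and $L=\lbrace i\mid \mathrm d(g_i)+a_i=p\rbrace$. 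The degree-$p$ component of $F$ is $\sum_{i\in L}\mathrm M(g_i)\mathrm M(F_i)$; if it were nonzero then $\mathrm d(F)=p$, and since $\mathrm d(g_i)\in\mathrm d(\mathbf A)$ for $i\in L$ this would give $\mathrm d(F)\in\mathrm d(F_i)+\mathrm d(\mathbf A)$, a contradiction. Hence $\sum_{i\in L}\mathrm M(g_i)\mathrm M(F_i)=0$. Reading the monomials $\mathrm M(g_i)$ inside the algebra of leading terms $\KK[\mathrm d(\mathbf A)]$, the vector $v=\sum_{i\in L}\mathrm M(g_i)\mathbf e_i$ lies in the kernel of $(p_1,\ldots,p_r)\mapsto\sum p_it^{a_i}$, so Theorem~\ref{th:kernel} (applied to $\mathrm d(\mathbf A)$ and the ideal generated by $a_1,\ldots,a_r$; its proof does not use minimality of the $a_i$) writes $v$ as a combination of the syzygies $t^{\alpha}\mathbf e_i-t^{\beta}\mathbf e_j$ with $i\neq j$ and $(\alpha,\beta)\in\mathrm R(a_i,a_j)$. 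Since that map is homogeneous for the weighting $\deg\mathbf e_i=a_i$ and $v$ is homogeneous of weight $p$, the combination may be taken homogeneous of weight $p$ with each coefficient a single monomial.

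Next I would lift this identity to $\mathbf A$. Each generator $t^{\alpha}\mathbf e_i-t^{\beta}\mathbf e_j$ that occurs is, up to a scalar monomial, the symbol of a unique $G\in\mathrm{MinSP}(F_1,\ldots,F_r)$: the lex-minimal factorizations of $\alpha$ and $\beta$ over $(\mathrm d(f_1),\ldots,\mathrm d(f_s))$ give monic elements of $\mathbf A$ with leading monomials $t^{\alpha}$ and $t^{\beta}$, and $\mathrm d(G)<\alpha+a_i=\beta+a_j$. By hypothesis $\mathrm R_{\mathbf A}(G,\lbrace F_1,\ldots,F_r\rbrace)=0$, so Theorem~\ref{division-ideal} yields $G=\sum_iq_i^{(G)}F_i$ with $\mathrm d(q_i^{(G)}F_i)\le\mathrm d(G)<\alpha+a_i$ whenever $q_i^{(G)}\neq0$. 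Lifting every monomial coefficient to a scalar-times-monic element of $\mathbf A$ of the same degree and substituting into $v$, the weight-$p$ contributions cancel exactly as they did in $\KK[\mathrm d(\mathbf A)]$, and one is left with a new expression $F=\sum_ig_i'F_i$ with $\max\lbrace\mathrm d(g_i')+a_i\mid g_i'\neq0\rbrace<p$. As this maximum is a nonnegative integer it cannot strictly decrease forever, so iterating the step is impossible, contradicting the existence of $F$; therefore $\lbrace F_1,\ldots,F_r\rbrace$ is a basis of $\mathbf M$.

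The step I expect to be the main obstacle is the degree bookkeeping in the lifting: one has to fix the weight grading on $\mathbf A^{\,r}$ precisely, check that the chosen homogeneous decomposition of $v$ really does lift to an identity in $\mathbf A^{\,r}$ in which the weight-$p$ parts cancel, and confirm that working with the particular representatives in $\mathrm{MinSP}(F_1,\ldots,F_r)$, rather than with arbitrary $S$-polynomials, loses nothing, since the lex-minimal choice only pins down one preimage of each relevant monomial and the resulting discrepancy is absorbed into the strictly-lower-degree terms. A minor point is that $\lbrace a_1,\ldots,a_r\rbrace$ need not minimally generate an ideal of $\mathrm d(\mathbf A)$, which is harmless because the proof of Theorem~\ref{th:kernel} is insensitive to this.
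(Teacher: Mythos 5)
Your proposal is correct and follows essentially the same strategy as the paper: a descent on the maximal degree $p$ of a representation $F=\sum_i g_iF_i$, using the hypothesis that the minimal $S$-polynomials reduce to zero in order to produce a new representation with strictly smaller $p$, which is impossible indefinitely. The only difference is in how the top-degree cancellation is decomposed: the paper argues by an explicit induction on the number $l$ of summands attaining degree $p$, pairing two of them at a time via $\mathrm R(a_i,a_j)$, whereas you invoke Theorem~\ref{th:kernel} together with a homogeneity argument to split the whole leading-term syzygy at once; both routes are valid, and your remark that Theorem~\ref{th:kernel} does not actually require minimality of $\{a_1,\ldots,a_r\}$ is accurate.
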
 

\begin{proof} 
In order to simplify notation, set $a_i=\mathrm{d}(F_i)$ for all $i\in \{1,\ldots,r\}$, and $S=\mathrm{d}(\mathbf A)$.

\emph{(1)} implies  \emph{(2)} follows from Proposition \ref{caracterization}. 

For the other implication, we are going to show that for each $R\in \mathbf M$, $\mathrm{d}(R)\in  \bigcup_{i=1}^r(a_i+S)$. 


Take $R\in \mathbf M$. If $R=0$, we are done. Otherwise, we can find an expression of the form $R=g_1F_1+\dots+g_rF_r$ with $g_1,\ldots,g_r\in{\bf A}$. Assume that  $\mathrm{d}(R)\in {\mathbb N}\setminus \bigcup_{i=1}^r(a_i+S)$. Set 
\[
p=\max\nolimits_{i,\ g_i\not=0}(a_i+\mathrm \alpha_i). 
\]
Where $\alpha_i=\mathrm{d}(g_i)$, $i\in\{1,\ldots,r\}$. Then $p>\mathrm{d}(R)$. We shall prove that there exists another expression of $R$, say $R=g'_1F_1+\dots+g'_rF_r$ with $p>p'=\max_{i,\  g'_i\not=0}(a_i+ \mathrm{d}(g'_i))$. And this eventually leads to a contradiction, since the interval $\{\mathrm{d}(R)+1,\ldots, p\}$ has finitely many elements.

Suppose, without loss of generality, that $p=\mathrm \alpha_i+a_i$, $i\in\{1,\ldots,l\}$ and $p>\mathrm \alpha_i+a_i$, $\in\{l+1,\ldots,r\}$. Clearly $l\geq 2$. We prove by induction on $l$ that we can rewrite $R$ as $R=g'_1F_1+\dots+g'_rF_r$ with $p>p'=\max_{i, g'_i\not=0}(\mathrm{d}(g'_i)+a_i)$.

\begin{enumerate}[(i)]
\item We first suppose that $l=2$ and let $\mathrm M(g_1)=c_{g_1}t^{\alpha_1}$, $\mathrm M(g_2)=c_{g_2}t^{\alpha_2}$. It follows from the hypothesis that $c_{g_2}=-c_{g_1}$, and  also that $\alpha_1=s+s_1, \alpha_2=s+s_2$ with $(s_1,s_2)\in \mathrm R(a_1,a_2)$. Hence we have 
\[
c_{g_1}t^{\alpha_1}t^{a_1}+c_{g_2}t^{\alpha_2}t^{a_2}=c_{g_1}t^s(t^{s_1}t^{a_1}-t^{s_2}t^{a_2}).
\] 
Let $\tilde{g}_1,\tilde{g}_2\in{\bf A}$ such that $\mathrm M(\tilde{g}_1)=t^{s_1}$, $\mathrm M(\tilde{g}_2)=t^{s_2}$, and $\tilde{g}_1F_1-\tilde{g}_2F_2$ is a minimal $S$-polynomial. We have $\mathrm{d}(\tilde{g}_1F_1-\tilde{g}_2F_2)<s_1+a_1= \alpha_1-s+a_1=p-s$. By hypothesis, $\mathrm R_{\mathbf A}(\tilde{g}_1F_1-\tilde{g}_2F_2, \{F_1,\ldots, F_r\})=0$, and thus  \[\tilde{g}_1F_1-\tilde{g}_2F_2=\bar{g}_1F_1+\bar{g}_2F_2+\dots+\bar{g_r}F_r,\] with $\mathrm{d}(\bar{g_i}F_i)\leq \mathrm{d}(\tilde{g}_1F_1-\tilde{g}_2F_2)<p-s$ for all $i\in\lbrace 1,\ldots,r\rbrace$. We can then rewrite $R$ as  
\begin{align*}
R&=(g_1-c_{g_1}t^s\tilde{g}_1)F_1+(g_2-c_{g_2}t^s\tilde{g}_2)F_2+c_{g_1}t^s\bar{g}_1F_1+c_{g_2}t^s\bar{g}_2F_2+\sum\nolimits_{i\geq 3}g_iF_i\\ &=\sum\nolimits_{i=1}^r{g}_i' F_i,
\end{align*}
with $\mathrm{d}({g}'_iF_i)<p$ for all $i\in\lbrace 1,\ldots,r\rbrace$. 

\item Now let $l>2$ and let $\mathrm M(g_i)=c_{g_i}t^{s_i}$ for all $i\in\lbrace 1,\ldots,r\rbrace$. We have  $R=\sum_{i=1}^rg_iF_i=g_1F_1-\frac{c_{g_1}}{c_{g_2}}g_2F_2+(\frac{c_{g_1}}{c_{g_2}}+1)g_2F_2+\sum_{i=3}^rg_iF_i$. It follows from (i) that $g_1F_1-\frac{c_{g_1}}{c_{g_2}}g_2 F_2= \bar{g}_1F_1+\dots+\bar{g}_rF_r$ with $\max_{i, \bar{g}_i\not=0}\mathrm{d}(\bar{g}_iF_i)< p$. Hence $R=\tilde{g}_1F_1+\ldots+\tilde{g}_rF_r$ with 
\begin{itemize}
\item $\tilde{g}_1=\bar{g}_1$,   
\item $\tilde{g}_2=\bar{g}_2+(\frac{c_{g_1}}{c_{g_2}}+1)g_2$,
\item $\tilde g_i=\bar g_i+g_i$ for $i\in \{3,\ldots, r\}$.
\end{itemize}
In particular, the set $\lbrace i \mid \mathrm{d}(\tilde{g}_iF_i)=p\rbrace$ has at most $l-1$ elements, and it follows from the induction hypothesis that 
$R=g'_1F_1+\ldots+g'_rF_r$ with $p>p'={\mathrm max}_{i, g'_i\not=0}(\mathrm{d}(g'_i)+\mathrm{d}(F_i))$. 
\qedhere
\end{enumerate}
\end{proof}

\begin{alg} Let ${\bf M}=\sum_{i=1}^rF_i{\bf A}$. 

\begin{enumerate}

\item If for all $F\in \mathrm{MinSP}(F_1,\ldots,F_r)$, $\mathrm R_{\bf A}(F,\lbrace F_1,\ldots,F_r\rbrace)=0$ then, by Theorem \ref{S-caracterization}, $\lbrace F_1,\ldots,F_r\rbrace$ is a basis of ${\bf M}$. Return $\{F_1,\ldots, F_r\}$.

\item If $\mathrm R_{\bf A}(F,\lbrace F_1,\ldots,F_r\rbrace)\neq 0$ for some $F\in SP(F_1,\ldots,F_r)$, then we set $F_{r+1}=\mathrm R_{\bf A}(F,\lbrace F_1,\ldots,F_r\rbrace)$ and we restart with $\lbrace F_1,\ldots,F_{r+1}\rbrace$.
\end{enumerate}
Since ${\mathbb N}\setminus \bigcup_{i=1}^r(\mathrm{d}(F_i)+S)$ has finitely many elements, then the procedure stops after a finite number of steps, returning a basis of ${\bf M}$.
\end{alg}

\begin{lstlisting}[basicstyle=\ttfamily, tabsize=2, caption={generatorsModule}]
generatorsModule:=function(A,M,t)
	local S, gens, gM, a, b, da, db, i, j, rs, rd, rel, fcta, fctb, C, 
		pair, reduction, n;
	
	gens:=List(A, DegreeOfLaurentPolynomial);
	n:=Length(A);
	S:=NumericalSemigroup(gens);
	
	gM:=ShallowCopy(M);
	C:=[];
	for i in [1..Length(gM)] do
	  for j in [i+1..Length(gM)] do
	    Add(C,[gM[i],gM[j]]);
	  od;
	od;
	while C<>[] do
		pair:=Remove(C,1);
		a:=pair[1];
		b:=pair[2];
		da:=DegreeOfLaurentPolynomial(a);
		db:=DegreeOfLaurentPolynomial(b);
		rs:=R(da,db,S);
		reduction:=true;
		for rel in rs do
			fcta:=FactorizationsIntegerWRTList(rel[1],gens)[1];
			fctb:=FactorizationsIntegerWRTList(rel[2],gens)[1];
			rd:=reduce(A,gM,
				a*Product(List([1..n], i->A[i]^fcta[i]))-
				b*Product(List([1..n], i->A[i]^fctb[i])));
			if not(IsZero(rd)) then
				C:=Union(C,List(gM, x->[x,rd]));
				Add(gM,rd);
			fi;
		od;
	od;
	
	reduction:=false;
	while not reduction do
		reduction:=true;
		a:=First(gM, x->x<>reduce(A,Difference(gM,[x]),x));
		if a<>fail then
			rd:=reduce(A,Difference(gM,[a]),a);
			if IsZero(rd) then
				gM:=Difference(gM,[a]);
			else
				gM:=Union(Difference(gM,[a]),[rd]);
			fi;
			reduction:=false;
		fi;
	od;
	return gM;
end;
\end{lstlisting}

\begin{ex} \label{ex-2} Let ${\bf A}=\KK[t^6+t,t^4]$ be as in Example \ref{ex-1}, and recall that  $\lbrace f_1=t^6+t, f_2=t^4, f_3=t^7+\frac{1}2 t^2\rbrace$ is a basis of $\mathrm{d}({\bf A})$. Let ${\bf M}=F_1{\bf A}+F_2{\bf A}$ with $F_1=t^3$ and $F_2=t^4$. We have $(3+\mathrm{d}(\mathbf A))\cap (4+\mathrm{d}(\mathbf A))=\{10,11\}+\mathrm{d}(\mathbf A)$. Thus $\mathrm R(3,4)=\{(7,6),(8,7)\}$. 

For $(7,6)$, $7=\mathrm{d}(f_3)$ and $6=\mathrm{d}(f_1)$ (and these are the only factorizations of $7$ and $6$ in terms of the generators of $\mathrm{d}(\mathbf A)$). We have the S-polynomial \[f_3F_1-f_1F_2=\left(t^7+\frac{1}2t^2\right)t^3-(t^6+t)t^4=-\frac{1}2t^5.\]
We take $F_3=t^5$, and as $5\not\in \{3,4\}+\mathrm{d}(\mathbf A)$, we add it to our system of generators, obtaining $\{F_1,F_2,F_3\}$.

Now for $(8,7)$ we have the S-polynomial 
\[
f_2^2F_1-f_3F_2 =t^8t^3-\left(t^7+\frac{1}2 t^2\right)t^4=-\frac{1}2 t^6.
\]
Set $F_4=t^6$. As $6\not\in \{3,4,5\}+\mathrm{d}(\mathbf A)$, we add it to our generating set: $\{F_1,F_2,F_3,F_4\}$. One can show that any other S-polynomial with respect to this new generating system reduces to zero, and thus $\{F_1,F_2,F_3,F_4\}$ is a basis for $\mathrm{d}(\mathbf M)$.

\begin{verbatim}
gap> A:=SemigroupOfValuesOfCurve_Global([t^6+t,t^4],"basis");
[ t^4, t^6+t, t^7+1/2*t^2 ]
gap> M:=[t^3,t^4];;                                                             
gap> generatorsModule(A,M,t);
[ t^3, t^4, t^5, t^6 ]
gap> SetInfoLevel(InfoNumSgps,2);
gap> generatorsModule(A,M,t);
#I  new generator t^5 of degreee 5
#I  new generator t^6 of degreee 6
#I  Reducing...
[ t^3, t^4, t^5, t^6 ]
\end{verbatim}
\end{ex}

\section{Module of K\"ahler differentials}

Let $\lbrace f_1,\ldots,f_r\rbrace$ be a set of polynomials of $\KK[t]$ and ${\bf A}=\KK[f_1,\ldots,f_r]$. We shall assume that ${\mathbb N}\setminus \mathrm{d}({\bf A})$ is a finite set, in particular $\mathrm{d}({\bf A})$ is a numerical semigroup. We shall denote it by $S$. Let $F_i=f_i'(t)$ for all $i\in\lbrace 1,\ldots,r\rbrace$, and let $\mathbf M=F_1 \mathbf A+\dots+F_r \mathbf A$. We know that the set $I=\mathrm{d}({\bf M})=\lbrace \mathrm{d}(F) \mid F\in {\bf M}\rbrace$ is a relative ideal of $S$. 

Given $g\in{\bf A}$, we have $g'(t)\in {\bf M}$. In particular, if $s\in S$, then $s-1\in I$. We say that $s-1$ is an \emph{exact degree}. We call the other elements of $I$ \emph{non exact degrees} of $\mathbf M$. We denote by $\mathrm{NE}({\bf M})$ the set of non exact degrees, that is 
\[
\mathrm{NE}(\mathbf M)=\{ i\in I \mid i+1\not\in S\}.
\] 
Let $\mathrm{ne}(\mathbf M)$ be the cardinality of $\mathrm{NE}({\bf M})$. It follows that $\mathrm{ne}(\mathbf M)\leq \mathrm g(S)$, the genus of $S$.

\begin{ex}
Let $x(t)$ and $y(t)$ be polynomials of degree $3$ and $4$ respectively. As $\gcd(3,4)=1$,  $\{x(t), y(t)\}$ is a basis for ${\bf A}=\mathbb K[x(t),y(t)]$ and $S=\mathrm{d}({\bf A})=\langle 3,4\rangle$. Set $\mathbf M=x'(t){\bf A}+y'(t){\bf A}$. Then $I=\mathrm{d}({\bf M})$ contains the ideal $J=(2,3)+S$. The lattice of ideals of $S$ containing $J$ is the following. 
\begin{center}
	
	\begin{verbatim}
	gap> s:=NumericalSemigroup(3,4);;
	gap> oi:=overIdeals([2,3]+s);
	[ <Ideal of numerical semigroup>, <Ideal of numerical semigroup>, 
	<Ideal of numerical semigroup>, <Ideal of numerical semigroup>, 
	<Ideal of numerical semigroup> ]
	gap> List(oi,MinimalGenerators);
	[ [ 2, 3 ], [ 0, 1, 2 ], [ 0, 2 ], [ 1, 2, 3 ], [ 2, 3, 4 ] ]
	\end{verbatim}	
	
	\begin{tikzpicture}[y=.25cm, x=.25cm,font=\footnotesize]

	\draw (5,-3) -- (5,-8);
	
	\draw (5,-3) -- (0,2);
	
	\draw (5,-3) -- (10,2);
	
	\draw (0,2) -- (5,7);
	
	\draw (10,2) -- (5,7);
	
	
	
	\filldraw[fill=black!40,draw=black!80] (5,-3) circle (2pt)    node[right] {$J\cup\{4\}$};

	\filldraw[fill=black!40,draw=black!80] (0,2) circle (2pt)    node[left] {$J\cup\{0,4\}$};
	
	\filldraw[fill=black!40,draw=black!80] (10,2) circle (2pt)    node[anchor=west] {$J\cup\{1,4\}$};
	
	\filldraw[fill=black!40,draw=black!80] (5,7) circle (2pt)    node[anchor=west] {$\mathbb N$};
	
	\filldraw[fill=black!40,draw=black!80] (5,-8) circle (2pt)    node[right] {$J$};
	\end{tikzpicture}
\end{center}

The set of non exact elements for each ideal is:
\begin{verbatim}
gap> List(oi,non exactElements); 
[ [  ], [ 0, 1, 4 ], [ 0, 4 ], [ 1, 4 ], [ 4 ] ]
\end{verbatim}

And all these ideals can be realized as $\mathrm{d}(M)$ for some $x(t)$, $y(t)$.
\begin{itemize}
	\item $J=(2,3)+S=I$ for $(x(t),y(t))= (t^3,t^4)$.
	\item $J\cup\{4\}= (2,3,4)+S=I$ for $(x(t),y(t))= (t^3+t^2,t^4)$.
	\item $J\cup\{0,4\}= (0,2)+S=I$ for $(x(t),y(t))= (t^3,t^4+t)$.
	\item $J\cup\{1,4\}= (1,2,3)+S= \mathrm I$ for $(x(t),y(t))= (t^3,t^4+t^2)$.
	\item $\mathbb N=J\cup\{0,1,4\}= (0,1,2)+S=\mathrm I$ for $(x(t),y(t))= (t^3+t,t^4)$.
\end{itemize}

\begin{verbatim}
gap> A:=[t^3,t^4];;
gap> generatorsKhalerDifferentials(A,t);
[ t^2, t^3 ]
gap> A:=[t^3+t^2,t^4];;
gap> generatorsKhalerDifferentials(A,t);
[ t^2+2/3*t, t^3, t^4 ]
gap> A:=[t^3,t^4+t];;
gap> generatorsKhalerDifferentials(A,t);
[ 1, t^2 ]
gap> A:=[t^3,t^4+t^2];;
gap> generatorsKhalerDifferentials(A,t);
[ t, t^2, t^3 ]
gap> A:=[t^3+t,t^4];;    
gap> generatorsKhalerDifferentials(A,t);
[ 1, t, t^2 ]
\end{verbatim}
\end{ex}

\begin{proposicion} Let the notations be as above. If $S$ is symmetric, then  $\mathrm{ne}(\mathbf M)\leq \frac{{\mathrm{F}}(S)}{2}$.
\end{proposicion}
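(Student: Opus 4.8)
The plan is to improve the bound $\mathrm{ne}(\mathbf M)\le\mathrm g(S)$ obtained above to $\mathrm{ne}(\mathbf M)\le\mathrm g(S)-1$, using that $S$ is symmetric. For a symmetric numerical semigroup the reflection $x\mapsto\mathrm F(S)-x$ is a bijection between $\mathrm G(S)$ and $S\cap\{0,\dots,\mathrm F(S)\}$, hence $2\,\mathrm g(S)=\mathrm F(S)+1$ and $\mathrm g(S)-1=\frac{\mathrm F(S)-1}{2}\le\frac{\mathrm F(S)}{2}$; so proving $\mathrm{ne}(\mathbf M)\le\mathrm g(S)-1$ gives the proposition. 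Recall that $\mathrm{ne}(\mathbf M)\le\mathrm g(S)$ came from the injection $\mathrm{NE}(\mathbf M)\hookrightarrow\mathrm G(S)$, $i\mapsto i+1$; thus it suffices to find one gap $g$ of $S$ outside the image, i.e. with $g-1\notin\mathrm d(\mathbf M)$.

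The candidate I would take is the Frobenius number $\mathrm F(S)$ itself: I would try to show $\mathrm F(S)-1\notin\mathrm d(\mathbf M)$, which excludes the gap $\mathrm F(S)$ and yields $\mathrm{ne}(\mathbf M)\le\mathrm g(S)-1$. To prove it, argue by contradiction, assuming $F=\sum_i g_iF_i\in\mathbf M$ with $\mathrm d(F)=\mathrm F(S)-1$. Using Theorem \ref{division-ideal} and the reduction it supplies, rewrite this expression so that every monomial occurring in the products $g_iF_i$ has exponent in $\mathrm d(F_i)+S$. A surviving monomial of degree exactly $\mathrm F(S)-1$ then has to come from a cancellation governed by an $S$-relation among the $F_i$ (a minimal $S$-polynomial, in the sense preceding Theorem \ref{S-caracterization}), and one follows this relation downward, invoking repeatedly that $\mathrm F(S)\notin S$ and that, $S$ being symmetric, $s\in S$ forces $\mathrm F(S)-s\notin S$, until the supports that would have to be matched no longer can be.

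I expect this last part to be the main obstacle. The inequality $\mathrm{ne}(\mathbf M)\le\mathrm g(S)$ is soft and holds for an arbitrary relative ideal of any symmetric $S$, so the extra half-unit cannot be read off from the semigroup alone: it must genuinely use that $\mathbf M$ is generated by the derivatives $F_i=f_i'(t)$ and not by arbitrary polynomials. Organising the cancellation bookkeeping so that this derivative structure, together with the symmetry of $S$, forbids the top degree $\mathrm F(S)-1$ is where the real work lies; a possibly cleaner route would be to exploit the duality underlying symmetry — that the canonical module of $\mathbf A$ (equivalently of $S$) is principal — and deduce the missing degree from that, but I would only turn to it if the direct cancellation argument proved unwieldy.
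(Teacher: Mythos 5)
Your reduction of the statement to $\mathrm{ne}(\mathbf M)\le \mathrm g(S)-1$ is arithmetically correct (for a symmetric $S\neq\mathbb N$ the Frobenius number $\mathrm F(S)=2\mathrm g(S)-1$ is odd), but both that target and the lemma you propose in order to reach it are false. For symmetric $S$, if $d\in\mathrm{NE}(\mathbf M)$ then $d+1\notin S$, so by symmetry $\mathrm F(S)-d-1\in S$, hence $\mathrm F(S)-1=d+(\mathrm F(S)-d-1)\in \mathrm{d}(\mathbf M)$ and, since $\mathrm F(S)\notin S$, $\mathrm F(S)-1\in\mathrm{NE}(\mathbf M)$. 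Thus your candidate $\mathrm F(S)$ is the one gap that is \emph{always} hit as soon as $\mathrm{NE}(\mathbf M)\neq\emptyset$ --- the paper relies on exactly this fact later, in the analysis of the cases $\mathrm{ne}(\mathbf M)\in\{1,2\}$. Worse, the sharpened bound itself fails: for $(x,y)=(t^3+t,t^4)$, an example worked out in this very section, one has $S=\langle 3,4\rangle$, $\mathrm F(S)=5$, $\mathrm{d}(\mathbf M)=\mathbb N$ and $\mathrm{NE}(\mathbf M)=\{0,1,4\}$, so $\mathrm{ne}(\mathbf M)=3=\mathrm g(S)>\tfrac{5}{2}$. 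No amount of $S$-polynomial bookkeeping or canonical-module duality will close the step you flag as ``where the real work lies'', because the statement being aimed at is false.

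What is actually true, and what the paper proves, is $\mathrm{ne}(\mathbf M)\le\mathrm g(S)=\frac{\mathrm C(S)}{2}=\frac{\mathrm F(S)+1}{2}$: the $\mathrm F(S)$ in the display should be the conductor $\mathrm C(S)$. The proof is precisely the soft counting you set aside as insufficient: $\mathrm{NE}(\mathbf M)\subseteq\{s\in\mathbb N\mid s+1\notin S\}$, a set of cardinality $\mathrm g(S)$, and for symmetric semigroups the genus is half the conductor. You were right to observe that the inequality as printed demands strictly more than the genus bound; the correct conclusion to draw from that observation is that the statement carries an off-by-one misprint, not that a finer cancellation argument is required.
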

\begin{proof} 
In fact, the cardinality of $\lbrace s \mid s+1\notin S \rbrace$ is, in this case, $\frac{{\mathrm{F}}(S)}{2}$ (see for instance \cite[Chapter 3]{ns}).
\end{proof}

In the following we shall suppose that $r=2$, and that $\KK$ is an algebraically closed field of characteristic zero. We shall also use the notation $X(t), Y(t)$ for $f_1(t),f_2(t)$ and we recall that $\lambda_{\bf A}({\bf K}[t]/{\bf K}[X(t),Y(t)]<+\infty$. Let $f(X,Y)$ be the monic generator of the kernel of the map $\psi:\KK[X,Y]\to \KK[t], \psi(X)=X(t),\psi(Y)=Y(t)$. Then $f$ has one place at infinity (see \cite{ab}). 

We shall denote $S={\mathrm{d}}({\bf A})$ by $\Gamma(f)$. Given a nonzero polynomial $g(X,Y)\in\KK[X,Y]$, the element ${\mathrm{d}eg}_tg(X(t),Y(t))$ of $\Gamma(f)$ coincides with the rank over $\KK$ of the $\KK$-vector space $\frac{\KK[X,Y]}{(f,g)}$ (see for instance \cite[Chapter 4]{ag}). 

Let $f_X,f_Y$ denote the partial derivatives of $f$ and let $(f-\lambda)_{\lambda\in\KK}$ be the family of translates of $f$. Let $\lambda\in\KK$ and let $V(f-\lambda)=\lbrace P\in\KK^2 \mid (f-\lambda)(P)=0\rbrace$ be the curve of $\KK^2$ defined by $f-\lambda$. Given $P=(a,b)\in V(f-\lambda)$, we denote by $\mu_P^{\lambda}$ the \emph{local Milnor number} of $(f-\lambda)$ at $P$ (if $\mathfrak{m}_P=(X-a,Y-b)$, then $\mu_P^{\lambda}$ is defined to be the rank of the $\KK$-vector space $\KK[X,Y]_{\mathfrak{m}_P}/{(f,g)\KK[X,Y]_{\mathfrak{m}_P}}$). We say that $f-\lambda$ is \emph{singular} at $P$ if $\mu_P^{\lambda}>0$, otherwise, $P$ is a \emph{smooth point} of $f-\lambda$. We say that $f-\lambda$ is \emph{singular} if $f-\lambda$ has at least one singular point. In our setting, if $f-\lambda$ is singular, then it has a finite number of singular points. Furthermore, there is a finite number of translates of $f$ which are singular. Let $\mu(f)=\dim_{\KK}\frac{\KK[X,Y]}{(f_X,f_Y)}$, then 
\[\mu(f)=\sum_{\lambda\in{\bf K}}\sum_{P\in V(f-\lambda)}\mu_P^{\lambda},\] 
that is, $\mu(f)$ is the sum of local Milnor numbers at the singular points of the translates of $f$.

Write \[X(t)=t^n+\alpha_1t^{n-1}+\dots+\alpha_n\] and \[Y(t)=t^m+\beta_1t^{m-1}+\dots+\beta_m\] and suppose, without loss of generality, that $m<n$ and also (by taking the change of variables $t_1=t+\frac{\beta_1}{m}$) that $\beta_1=0$. We can express $f(X,Y)$ as $f(X,Y)=Y^n+a_1(X)Y^{n-1}+\dots+a_n(X)$. Clearly $n,m\in \Gamma(f)$. Let $d$ be a divisor of $n$ and let $g(X,Y)$ be a $Y$-monic polynomial of degree $\frac{n}{d}$ in $Y$. Let 
\[
f=g^{d}+c_1(X,Y)g^{d-1}+\dots+c_d(X,Y)
\]  
with deg$_Yc_i(X,Y)<\frac{n}{d}$ for all $i\in\lbrace 1,\ldots,d\rbrace$, the expansion of $f$ with respect to  $g$. We say that $g$ is a \emph{$d$th approximate root} of $f$ if $c_1(X,Y)=0$. It is well known that a $d$th approximate root of $f$ exists and it is unique. We denote it by 
$\App(f,d)$. With these notations we have the following algorithm that computes a set of generators of $\Gamma(f)$ (see for instance \cite{ag1}).

\begin{alg} \label{alg-di}
Let $r_0=m=d_1$ and let $r_1=n$. Let $d_2=\gcd(r_0,r_1)=\gcd(r_1,d_1)$ and let $g_2=\App(f,d_2)$. We set $r_2=\mathrm{d}(g_2(X(t),Y(t)))$ and $d_3=\gcd(r_2,d_2)$ and so on.  

It follows from \cite{ab} that there exists $h>1$ such that $d_{h+1}=1$, and also that $\Gamma(f)=\langle r_0,r_1,\ldots,r_h\rangle$. We set $e_k=\dfrac{d_k}{d_{k+1}}$ for all $k\in\lbrace 1,\ldots,h\rbrace$.
\end{alg}

The following Proposition gives the main properties of $\Gamma(f)$.

\begin{proposicion} Let $f$, $r_i$, $d_i$, and $e_i$ be defined as above. We have the following:

\begin{enumerate}
    
\item $\Gamma(f)$ is free with respect to the arrangement $(r_0,r_1,\ldots,r_h)$.

\item $r_kd_k>r_{k+1}d_{k+1}$ for all $k\in\lbrace 1,\ldots,h\rbrace$.

\item $\mathrm{d}(f_y(x(t),y(t))=\sum_{i=1}^h(e_i-1)r_i$.

\item $\mathrm C(\Gamma(f))=\mu(f)=\mathrm{d}(f_y(x(t),y(t)))-n+1$.
\end{enumerate}
\end{proposicion}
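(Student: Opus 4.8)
The plan is to obtain (1) and (2) directly from the Abhyankar--Moh theory of curves with one place at infinity (\cite{ab}, \cite{ag1}), and then to deduce (3) and (4) from them together with the properties of free numerical semigroups recorded above. For (1), the point is that the sequence $(r_0,r_1,\ldots,r_h)$ produced by Algorithm~\ref{alg-di} is, by its very construction, a $\delta$-sequence. The strict chain $d_1>d_2>\cdots>d_{h+1}=1$ is exactly the content of the Abhyankar--Moh theorem for one-place-at-infinity curves: the successive approximate roots $g_2,\ldots,g_h$ (together with $g_{h+1}=f$) make $d_{i+1}=\gcd(r_0,\ldots,r_i)$ drop strictly at each step and reach $1$ after finitely many steps. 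The relation $e_ir_i\in\langle r_0,\ldots,r_{i-1}\rangle$ then follows because $g_i^{e_i}$ and $g_{i+1}=\App(f,d_{i+1})$ are both $Y$-monic of the same $Y$-degree $n/d_{i+1}$: subtracting, the difference has strictly smaller degree in $Y$, and passing to $(X(t),Y(t))$ shows that $e_ir_i=\mathrm{d}(g_i^{e_i}(X(t),Y(t)))$ is attained by a polynomial expression in the lower approximate roots. Hence $\Gamma(f)=\langle r_0,\ldots,r_h\rangle$ is free for $(r_0,\ldots,r_h)$, which in particular (by the Proposition on free semigroups) makes $\Gamma(f)$ symmetric with $\mathrm F(\Gamma(f))=\sum_{i=1}^h(e_i-1)r_i-r_0$.

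For (2), since $d_{i+1}=\gcd(r_i,d_i)=d_i/e_i$, the inequality $r_kd_k>r_{k+1}d_{k+1}$ is equivalent to $r_{k+1}<e_kr_k$. This last inequality is a standard feature of the $\delta$-sequence of a one-place-at-infinity curve: $r_{k+1}$ is the $t$-degree of $g_{k+1}(X(t),Y(t))$, and it is strictly smaller than $e_kr_k$, the $t$-degree of $g_k^{e_k}(X(t),Y(t))$, because in passing from $g_k^{e_k}$ to $g_{k+1}$ one kills the dominant term of $g_k^{e_k}$ by subtracting a polynomial built from the lower approximate roots (see \cite{ab}).

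For (3), I would compute the $t$-degree of $f_y$ along the curve by differentiating $f$ through the chain of approximate roots (a Tschirnhausen/chain-rule argument): writing the $g_i$-adic expansions level by level and tracking leading monomials, the top-degree contributions accumulate to $\sum_{i=1}^h(e_i-1)r_i$; equivalently, this is the intersection number of the polar of $C$ with $C$, a classical computation (see \cite{ag}, \cite{ag1}). Finally, (4) follows by combining (3) with $\mathrm F(\Gamma(f))=\sum_{i=1}^h(e_i-1)r_i-r_0$ from (1), which gives $\mathrm C(\Gamma(f))=\mathrm F(\Gamma(f))+1=\mathrm{d}(f_y(x(t),y(t)))-n+1$, and with the identity $\mathrm C(\Gamma(f))=\mu(f)$. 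I expect this last identification to be the main obstacle: it is the global analogue, valid precisely because $C$ has one place at infinity, of Milnor's relation between the Milnor number and the conductor of a unibranch plane branch. Since $\Gamma(f)$ is symmetric one has $\mathrm C(\Gamma(f))=2\,\lambda_{\mathbf A}(\KK[t]/\mathbf A)$, so what must be shown is that the Milnor number of the whole family $(f-\lambda)_{\lambda\in\KK}$, i.e. $\dim_\KK\KK[X,Y]/(f_x,f_y)$, equals this value; this can be done either by invoking \cite{ab}, \cite{ag1} directly, or by comparing the ideals $(f,f_y)$ and $(f_x,f_y)$ modulo $f_y$ via a length argument, using $\mathrm{d}(f_y(x(t),y(t)))=\dim_\KK\KK[X,Y]/(f,f_y)$ from the interpretation of $\Gamma(f)$ through intersection multiplicities. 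Everything else is routine bookkeeping with the semigroup.
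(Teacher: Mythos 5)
Your route is the one the paper intends: the paper's own ``proof'' of this proposition is the single line ``See \cite{ag}'', so in substance you and the paper both defer the hard content (the Abhyankar--Moh structure of the $\delta$-sequence, the degree of the polar, and the identification $\mathrm C(\Gamma(f))=\mu(f)$) to \cite{ab}, \cite{ag}, \cite{ag1}; your sketch of (1) and (2) via $g_i^{e_i}-g_{i+1}$ and the adic expansion in lower approximate roots is the standard argument and is sound in outline. One concrete issue you should resolve rather than inherit: your derivation of (4) gives $\mathrm C(\Gamma(f))=\mathrm F(\Gamma(f))+1=\mathrm d(f_Y(X(t),Y(t)))-r_0+1$, so the stated formula $-n+1$ forces the normalization $r_0=n=\deg_Y f=\deg_t X(t)$ (i.e.\ $d_1=n$), whereas Algorithm \ref{alg-di} as printed sets $r_0=m=d_1$. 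Under the printed normalization item (3) itself is false: for $X=t^n$, $Y=t^m$, $f=Y^n-X^m$ with $\gcd(m,n)=1$ one gets $\mathrm d(f_Y)=(n-1)m$ while $\sum_{i=1}^h(e_i-1)r_i=(m-1)n$. So either swap the roles of $m$ and $n$ in the algorithm (the usual convention for $\delta$-sequences starts with the $Y$-degree of $f$) or replace $f_Y$ by $f_X$ and $-n+1$ by $-m+1$ in (3) and (4); your proof only closes once you commit to one of these. The remaining gap you flag yourself --- that $\mathrm C(\Gamma(f))=\mu(f)$, the global one-place-at-infinity analogue of Milnor's $\mu=c$ --- is indeed the nontrivial input, and leaving it to the references puts you exactly where the paper stands.
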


\begin{proof} See \cite{ag}.
\end{proof}

Let $\BB=\frac{\KK[X,Y]}{(f)}$ and let $x,y$ be the images of $X,Y$ in $\BB$. Let $\mathbf N=\BB  dx+\BB dy$ be the $\BB $-module generated by $\{dx, dy\}$, and let $\bar\BB$ be the integral closure of $\BB$.  Let $\tilde{\mathbf N}=\bar{\BB}dx+\bar{\BB}dy$. Let $\nu(f)=\dim_{\KK}\frac{\KK[X,Y]}{(f,f_X,f_Y)}={\dim}_{\KK}\frac{\BB}{(f_X,f_Y)}$. If we denote by $\ell(\cdot)$ the length of the module, then we have the following property.

\begin{proposicion}\cite[Corollary 2]{b} Let $f$ be defined as above.
\[\nu(f)=\ell\left(\tilde{\bf N}/{\bf N}\right)+\frac{\mu(f)}{2}.\]
\end{proposicion}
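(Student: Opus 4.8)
This statement is \cite[Corollary 2]{b}; what follows is a plan for the argument behind it. The idea is to translate both sides into data of the numerical semigroup $\Gamma(f)$ and its ideal $\mathrm d(\mathbf M)$. Work in the parametrized model: $\BB\cong\mathbf A=\KK[X(t),Y(t)]$, and its integral closure is $\bar\BB=\KK[t]$ (using $\lambda_{\mathbf A}(\KK[t]/\mathbf A)<\infty$ and that $t\mapsto(X(t),Y(t))$ is the normalization of $V(f)$). Here $dx$ and $dy$ become $X'(t)\,dt$ and $Y'(t)\,dt$, so, unwinding the definitions, $\mathbf N=\mathbf M\,dt\subseteq\KK[t]\,dt=\tilde{\mathbf N}$, where $\mathbf M=X'(t)\mathbf A+Y'(t)\mathbf A$. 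Consequently $\ell(\tilde{\mathbf N}/\mathbf N)=\dim_\KK(\KK[t]/\mathbf M)$, and since $\mathbf M$ admits a $\KK$-basis of monic polynomials whose degrees exhaust $\mathrm d(\mathbf M)$, this codimension equals $\#\bigl(\NN\setminus\mathrm d(\mathbf M)\bigr)$.

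For the term $\mu(f)/2$: since $\Gamma(f)$ is free it is symmetric, whence $\mathrm g(\Gamma(f))=\mathrm C(\Gamma(f))/2=\mu(f)/2$, and also $\mathrm g(\Gamma(f))=\dim_\KK(\bar\BB/\BB)$. Now for every $s\in\Gamma(f)$ one has $s-1\in\mathrm d(\mathbf M)$: take $g\in\mathbf A$ with $\mathrm d(g)=s$, then $g'\in\mathbf M$ and $\mathrm d(g')=s-1$ because $\mathrm{char}\,\KK=0$. So the \emph{exact} degrees $\{s-1:s\in\Gamma(f)\}$ lie in $\mathrm d(\mathbf M)$, which gives the disjoint decomposition
\[
\{\,j\in\NN:\ j+1\notin\Gamma(f)\,\}=\bigl(\NN\setminus\mathrm d(\mathbf M)\bigr)\ \sqcup\ \mathrm{NE}(\mathbf M).
\]
As $0\in\Gamma(f)$, the left-hand set has $\mathrm g(\Gamma(f))$ elements, so $\#(\NN\setminus\mathrm d(\mathbf M))=\mathrm g(\Gamma(f))-\mathrm{ne}(\mathbf M)=\mu(f)/2-\mathrm{ne}(\mathbf M)$. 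Putting the two paragraphs together, the assertion is equivalent to $\nu(f)=\mu(f)-\mathrm{ne}(\mathbf M)$, the relation announced in the introduction.

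It remains to prove $\nu(f)=\mu(f)-\mathrm{ne}(\mathbf M)$, which is the substantive point and the actual content of \cite{b}. I would use $\nu(f)=\dim_\KK\BB/(f_X,f_Y)\BB$: setting $F_X=f_X(X(t),Y(t))$ and $F_Y=f_Y(X(t),Y(t))$, differentiating $f(X(t),Y(t))=0$ gives $F_XX'(t)+F_YY'(t)=0$, and in the PID $\KK[t]$ this makes $(F_X,F_Y)\KK[t]$ principal and ties the Jacobian ideal of $\BB$ to $\mathbf M$. One then compares $\BB/(F_X,F_Y)\BB$ with $\bar\BB/(F_X,F_Y)\bar\BB$ through the exact sequences attached to $\BB\hookrightarrow\bar\BB$, keeping track of the conductor of $\bar\BB$ in $\BB$; the discrepancy is governed by the torsion submodule of $\Omega^1_{\BB/\KK}$, and the resulting count yields the identity. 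An equivalent route is to localize at the finitely many singular points of $V(f)$ and invoke the classical local comparison, at a plane branch, between its Tjurina number, its $\delta$-invariant, and the torsion of its module of K\"ahler differentials, and then add the local contributions (no term from infinity appears, since $\nu(f)$, $\mu(f)$ and $\Omega^1_{\BB/\KK}$ only see the affine curve). The crux, and the main obstacle, is precisely this last comparison — showing that the torsion of $\Omega^1_{\BB/\KK}$ accounts exactly for the conductor/$\delta$ gap — and it is here that $\mathrm{char}\,\KK=0$ and the one-place-at-infinity (plane-curve) hypothesis are genuinely used.
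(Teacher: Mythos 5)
The paper does not prove this proposition: it is quoted verbatim from Berger \cite[Corollary 2]{b}, and the only thing the paper does with it is precisely the dictionary you set up in your first two paragraphs, namely identifying $\ell(\tilde{\bf N}/{\bf N})$ with $\#\bigl(\NN\setminus\mathrm d(\mathbf M)\bigr)=\mathrm g(\Gamma(f))-\mathrm{ne}(\mathbf M)$ and $\mu(f)/2$ with $\mathrm g(\Gamma(f))$ via symmetry of the free semigroup $\Gamma(f)$. That part of your proposal is correct and coincides with the paragraph the paper places immediately \emph{after} the proposition. One point to watch: read literally inside $\KK[t]\,dt$, the module $\bar\BB\,dx+\bar\BB\,dy$ equals $\gcd(X',Y')\KK[t]\,dt$, which is not all of $\KK[t]\,dt$ (take $(X,Y)=(t^3,t^4)$); the identification $\tilde{\bf N}=\KK[t]\,dt$ is the intended reading of Berger's module, and the numerics ($\tau-\delta=3$ for the $(3,4)$-cusp versus $\#(\NN\setminus I)=3$) confirm it, but the paper's own phrase ``$\tilde{\bf M}=\{g'(t)\}$'' makes the same silent identification, so you are at least consistent with the source.

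The gap is in your third paragraph. After the dictionary, what ``remains to prove'' is $\nu(f)=\mu(f)-\mathrm{ne}(\mathbf M)$, and this is not an independently available fact you can lean on: it is the statement under discussion, rewritten. Your plan for it (use $F_XX'+F_YY'=0$ to tie the Jacobian ideal to $\mathbf M$, compare $\BB/(F_X,F_Y)$ with $\bar\BB/(F_X,F_Y)$ through the conductor, or localize and invoke the local comparison of the Tjurina number, the $\delta$-invariant and the torsion of $\Omega^1_{\BB/\KK}$) names the right ingredients but carries none of them out, and you say yourself that the crux is ``precisely this last comparison'' --- which \emph{is} Berger's Corollary 2. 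So the proposal amounts to a correct reformulation plus a pointer back to the same reference the paper cites; that is defensible as an account of where the content lives, but as a self-contained proof it is circular/incomplete, the missing piece being the actual length computation in \cite{b} relating the torsion of the differential module to the conductor gap.
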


In our setting, $\BB\simeq \KK[X(t),Y(t)]={\bf A}$, hence $\bar{\bf B}\simeq \bar{\bf A}=\KK[t]$, where $\bar{\bf A}$ is the integral closure of  ${\bf A}$. It follows that $\bf N$ is isomorphic to ${\bf M}=x'(t){\bf A}+y'(t){\bf A}$ and also that $\tilde{\bf N}$ is isomorphic to $\tilde{\bf M}=x'(t)\KK[t]+y'(t)\KK[t]=\lbrace g'(t), g(t)\in\KK[t]\rbrace$.

Note that if $g(X,Y)\in\KK[X,Y]$, then $\frac{d}{dt}g(X(t),Y(t))\in {\bf M}$,  whence $\mathrm{d}(\frac{d}{dt}g(X(t),Y(t)))\in I=\mathrm{d}({\bf M})$. It follows that $\lbrace s-1 \mid s\in \Gamma(f)\rbrace\subseteq I$ and $\mathrm{d}(\frac{d}{dt}g(X(t),Y(t)))$ is an exact element. In particular,  $\ell(\frac{\tilde{\bf N}}{\bf N})$ is the cardinality of the set $\lbrace s\in \mathrm G(\Gamma(f))\mid s-1\notin S\rbrace$. This cardinality is nothing but $\mathrm g(\Gamma(f))-\mathrm{ne}(\mathbf M)=\frac{\mu(f)}{2}-\mathrm{ne}(\mathbf M)$, and it follows that:

\begin{enumerate}

\item $\nu(f)=\mu(f)=\mathrm C(\Gamma(f))$ if and only if $\mathrm{ne}(\mathbf M)=0$, that is, every element of $I$ is exact;

\item $\nu(f)=\frac{\mu(f)}{2}$ if and only if $\mathrm{ne}(\mathbf M)=\mathrm g(\Gamma(f))$.
\end{enumerate}

In the following, we shall introduce the notion of characteristic exponents of $f$. Then we shall prove that, after possibly a change of variables, the curve $\mathrm V(f)$ has a parametrization in one of the following forms:

\begin{enumerate}

\item $X(\tau)=\tau^n, Y(\tau)=\tau^m$ (hence the equation of the curve is of the form $W^n-Z^m$),

\item $X(\tau)=\tau^n+c_{\lambda}t^{\lambda}+\ldots, Y(\tau)=\tau^n$ and $m+\lambda\notin \Gamma(f)$ (hence the degree of  $mX'(\tau)Y(\tau)-nX(\tau)Y'(\tau)$ is a non exact element of $I$).
\end{enumerate}
We will need to this end this technical Lemma.

\begin{lema}\label{isom} Let $q(t)=t+\sum_{i\geq 1} c_it^{-i}\in \KK(\!(t)\!)$ and define the map $l:\KK(\!(T)\!)\to \KK(\!(t)\!)$, $\alpha(T)\mapsto \alpha(q(t))$. In particular, $l(T)=q(t)$. Then $l$ is an isomorphism.
\end{lema}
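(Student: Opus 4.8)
The plan is to show that $l$ is a well-defined ring homomorphism, that it is injective, and that it is surjective, the surjectivity being the crux.

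First I would observe that $q(t) = t + \sum_{i\geq 1} c_i t^{-i}$ lies in $\KK(\!(t)\!)$ and has order $-1$ (a pole of order $1$ at $t=\infty$, or rather: viewing $\KK(\!(t)\!)$ as Laurent series in $t$, $q$ has a single positive-degree term $t$ and the rest are negative powers). Because $q$ has order exactly $-1$, for any Laurent series $\alpha(T) = \sum_{k\geq k_0} a_k T^k$ the substitution $\alpha(q(t))$ makes sense: each $q(t)^k$ is a Laurent series whose top-degree term is $t^k$, and for fixed exponent $j$ only finitely many of the $q(t)^k$ (namely those with $k \leq j$... more precisely those with $k_0 \leq k$ and $k \geq$ something) contribute to the coefficient of $t^j$, so the sum converges in the $(t^{-1})$-adic sense. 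Hence $l$ is a well-defined map, and it is clearly a $\KK$-algebra homomorphism since substitution respects addition and multiplication of convergent series. Injectivity is immediate: $l$ is a local homomorphism of complete discrete valuation rings (it sends the maximal ideal generated by $T^{-1}$, say, into that generated by $t^{-1}$, and strictly decreases... ) — more simply, if $\alpha \neq 0$ has order $k_0$ in $T$, then $l(\alpha)$ has order $k_0$ in $t$ because the leading term $a_{k_0} q(t)^{k_0}$ contributes $a_{k_0} t^{k_0}$ and nothing of lower-or-equal order cancels it; so $\ker l = 0$.

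The main work is surjectivity. The key point is that $l$ sends the uniformizer-type element $T$ to $q(t) = t + (\text{higher order in } t^{-1})$, and $\KK(\!(t)\!)$ is complete with respect to the $t^{-1}$-adic valuation. I would argue that every element of $\KK(\!(t)\!)$ is in the image by a successive-approximation/Hensel-type argument: first show $t$ itself is in the image. Since $q(t) = t + \sum_{i \geq 1} c_i t^{-i}$, one constructs a formal inverse $q^{\langle -1\rangle}(T) \in \KK(\!(T)\!)$, $q^{\langle -1\rangle}(T) = T + \sum_{i\geq 1} d_i T^{-i}$, with the property that $q(q^{\langle-1\rangle}(t)) = t$; the coefficients $d_i$ are determined recursively because composing two series of the form $t + O(t^{-1})$ and matching coefficients of $t^0, t^{-1}, t^{-2}, \dots$ yields, at each stage, a linear equation for the next $d_i$ in terms of previously determined data (the coefficient of $t^{-n}$ in $q(q^{\langle -1\rangle}(t))$ equals $d_n$ plus a polynomial in $c_1,\dots,c_n,d_1,\dots,d_{n-1}$). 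This is exactly a reversion-of-series computation, and convergence in the $t^{-1}$-adic topology is automatic because the $n$-th step only affects coefficients of $t^{-j}$ for $j \geq n$. Then $l(q^{\langle -1\rangle}(T)) = q^{\langle -1\rangle}(q(t)) = t$, so $t \in \operatorname{im} l$. Since $\operatorname{im} l$ is a subfield of $\KK(\!(t)\!)$ containing $\KK$ and $t$, it contains $\KK(t)$; and since $\operatorname{im} l$ is closed under the $t^{-1}$-adic limits — being the image of a complete ring under a continuous map, or directly because any Laurent series $\sum a_j t^j$ is the image of $\sum a_j (q^{\langle-1\rangle}(T))^j$, which converges in $\KK(\!(T)\!)$ — we conclude $\operatorname{im} l = \KK(\!(t)\!)$.

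I expect the main obstacle to be making the reversion argument clean: one must verify carefully that the recursive system defining $q^{\langle -1\rangle}$ has a unique solution at each stage (this uses that the linear coefficient of $q$ is $1 \neq 0$, so no division issues arise and $\operatorname{char}\KK$ is irrelevant here) and that the resulting series genuinely satisfies $q \circ q^{\langle -1\rangle} = \operatorname{id} = q^{\langle -1\rangle}\circ q$, i.e. that the formal inverse is a two-sided inverse — which follows since a formal power-series-type substitution operator with invertible linear part is bijective on the relevant completed ring. Everything else (well-definedness, the homomorphism property, injectivity) is routine bookkeeping with orders of Laurent series.
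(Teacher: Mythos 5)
Your proposal is correct and follows essentially the same route as the paper: both arguments reduce everything to constructing, by recursive series reversion (solving for one new coefficient $b_k$, respectively $d_k$, at each step and using $(t^{-1})$-adic convergence), a compositional inverse $q_1(T)=T+\sum_{i\ge 1}b_iT^{-i}$ of $q$, so that substitution of $q_1$ gives $l^{-1}$. The only caveats are cosmetic: your inequalities on supports and exponents are written for the wrong orientation of $\KK(\!(t)\!)$ (here series are bounded above in degree, so $q(t)^k$ contributes to $t^j$ precisely when $k\geq j$), and, unlike the paper, you solve for the right inverse $q\circ q^{\langle -1\rangle}=\mathrm{id}$ and then invoke the (standard, and correctly noted) fact that it is two-sided, whereas the paper directly builds the left inverse $q_1\circ q=\mathrm{id}$, which is the composition that $l$ actually computes.
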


\begin{proof} We clearly have $l(\alpha(T)+\beta(T))=l(\alpha(T))+l(\beta(T))$ and $l(\alpha(T)\beta(T))=l(\alpha(T))l(\beta(T))$ for all $\alpha(T),\beta(T)\in\KK(\!(T)\!)$. Furthermore, $l(1)=1$ and $\ker(l)=\lbrace 0\rbrace$. We shall now construct the inverse of $l$ by proving that $t=T+b_1T^{-1}+b_2T^{-2}+\ldots$ for some $b_i\in \mathbf K$. We shall do this by induction on $k\geq 1$. More precisely we shall prove that for all $k\geq 1$, there exist $b_k\in \KK$ such that  deg$_t (t-l(T+b_1T^{-1}+\dots+b_kT^{-k}))\leq -k-1$. We shall use the fact that for all $k\in \mathbb{Z}$, we can write
$$
l((q(t))^k)=t^k+\sum_{i\geq 1}c_{i}^{(k)}t^{k-i-1}
$$
for some $c_i^{(k)}\in{\bf K}$. If $k=1$, then we set $b_1=-c_1$. We have $t-l(T)-b_1l(T^{-1})=(-c_1-b_1)T^{-1}-\sum_{i\geq 2}c^{(1)}_it^{-i}=\sum_{i\geq 2}-c^{(1)}_it^{-i}$. Hence the assertion is clear. Suppose that the assertion is true for $k$ and let us prove it for $k+1$. By hypothesis we have 
$$
t=l(T+b_1T^{-1}+\dots+b_kT^{-k}) +\sum_{i\geq 1}c_i^{(k)}t^{-k-i}.
$$
Then we set $b_{k+1}=c_1^{(k)}$. But $l(c_1^{(k)}(q(t))^{-k-1})=c_1t^{-k-1}+\sum_{i\geq 1}\bar{c}_i^{(k+1)}t^{-k-2-i}$. Hence $t=l(T+b_1T^{-1}+\ldots+b_{k+1}T^{-k-1})+\sum_{i\geq 1}c_i^{(k+1)}t^{-k-1-i}$. This proves the assertion for $k+1$.

Let $q_1(T)=T+\sum_{k\geq 1}b_kT^{-k}$ and set $l_1(\gamma(t))=\gamma(q_1(T))$ (in particular $l_1(t)=q_1(T)$). Since deg$_t(t-l(q_1(T))\leq -k$ for all $k\geq 0$, then $t=l(q_1(T))$. This proves that $l$ is surjective, hence an ismorphism. Note that $l_1=l^{-1}$ because $l(l_1(t))=t$.
\end{proof}

Let us make the following change of variables 
\[T=t(1+\beta_2t^{-2}+\dots+\beta_mt^{-m})^{\frac{1}{m}}=t\left(1+\frac{1}{m}\beta_2t^{-2}+\ldots\right)=q(t).\] 

This change of variables defines a map $l:\KK(\!(T)\!)\to \KK(\!(t)\!)$, $l(T)=q(t)$. It follows from Lemma \ref{isom} that $l$ is an isomorphism. Let  $X_1(T)=X(l^{-1}(t))$ and $Y_1(T)=Y(l^{-1}(t))$. We have  \[Y_1(T)=T^m\hbox{ and }X_1(T)=T^n+\sum_{p< n}c_pT^{p},\] 
for some $c_p\in \mathbf K$, and we can easily verify that for all $g(X,Y)\in\KK[X,Y]$, $\mathrm{d}(g(X(t),Y(t)))$ is also the degree in $T$ of $g(X_1(T),Y_1(T))$. Furthermore, $\frac{d}{dt}(g(X(t),Y(t)))=\frac{d}{dT}(g(X_1(T),Y_1(T)))\frac{dT}{dt}$. 

Recall that the Newton-Puiseux exponents of $f$ are defined as follows: let $m_1=-n$ and let $D_2={\gcd}(m,n)=d_2$. For all $i\geq 2$ define $-m_i={\max}\lbrace p \mid  D_i\nmid p\rbrace$  and $D_{i+1}={\gcd}(D_i,m_i)$.  We have $D_{h+1}=d_{h+1}=11$ and $D_i=d_i$ for all $i\in\lbrace 1,\ldots,h\rbrace$ (the $d_i$ where defined in Algorithm \ref{alg-di}).

The Newton-Puiseux exponents are related to the sequence $r_0,\ldots,r_h$ by the following relation: $r_0=m$, $r_1=n$, and for all $k\geq 1, -r_{k+1}=-r_ke_k+(m_{k+1}-m_k)$ where we recall that  $e_k=\dfrac{d_k}{d_{k+1}}$ for all $k\in\lbrace 1,\ldots,h\rbrace$. In particular, $r_2=r_1e_1+m_2-m_1=r_1e_1-m_2-r_1=(e_1-1)r_1-m_2$. Hence $-m_2<r_2$.
 
Let  $\lambda=\max\lbrace p \mid p<n, c_p\not=0\rbrace$ and suppose that $\lambda>-\infty$. We have: 
$$
X_1(T)=T^n+c_{\lambda}T^{\lambda}+\dots \mbox{ and } Y_1(T)=T^m.
$$
The hypothesis on $\lambda$ implies that $c_{\lambda}\not=0$. Let
$$
W(T)=mX_1'(T)Y_1(T)-nY_1'(T)X_1(T).
$$
We have $W(T)=(m\lambda-nm)c_{\lambda}T^{m+\lambda-1}+\dots$. If $m+\lambda\notin \Gamma(f)$, then $m+\lambda-1$ is a non exact degree.

Suppose that $m+\lambda\in \Gamma(f)$.  We have then the following two possibilities.
 
 \begin{enumerate}
 \item $\lambda>-m_2$. In this case, $d_2\mid\lambda$. Hence $\lambda$ is in the group generated by $n,m$. Then 
 $m+\lambda=an+bm$ for some $a,b\in{\mathbb N}$.
 
 \item   $\lambda=-m_2$. In this case, $m+\lambda=m-m_2=an+bm+cr_2$ for some $a,b,c\in{\mathbb N}, c\not=0$. But 
 $m-m_2=m+r_2-(e_1-1)r_1$. Thus, $m+r_2-(e_1-1)r_1=an+bm+cr_2$. If $c\geq 1$, then $m-(e_1-1)r_1=an+bm+(c-1)r_2$, which is a contradiction because $m-(e_1-1)r_1=m-(e_1-1)n<0$. It follows that $c=0$, whence $m+r_2-(e_1-1)r_1=an+bm$, and $r_2=(a+e_1-1)n+(b-1)m$, but $d_2=\gcd(n,m)$ does not divide $r_2$. This is again a contradiction.
 \end{enumerate}
It follows that $\lambda<-m_2$ and $m+\lambda=an+bm$ for some $a,b\in{\mathbb N}$. Since $n>m>\lambda$ then $a\leq 1$. Furthermore, if $a=1$, then $b=0$.  Hence one of the following conditions holds.
 
\begin{enumerate}
\item $m+\lambda=n$. Let in this case $Y_2=Y_1+\alpha, \alpha\in\KK^*$. We have
$$
\bar{W}(T)=mX_1'(T)Y_2(T)-nY_2'(T)X_1(T)=[(m\lambda-nm)c_{\lambda}-\alpha mn]T^{n-1}+\cdots
$$
Hence, if $\alpha=\frac{\lambda-n}{n}c_{\lambda}=-\frac{m}{n}c_{\lambda}$, then $\bar{W}(T)$ has degree strictly less than $n-1$. As an example of this case, let $X(t)=t^9+t^5, Y(t)=t^4$. We have $W(t)=16t^8$ and $8+1=9\in \mathrm{d}({\bf A})$. If $\bar{Y}=t^4+\frac{4}{9}$, then 
 $\bar{W}(t)=mX'(t)\bar{Y}(t)-n\bar{Y}'(t)X(t)=\frac{-80}{9}t^4$ and $4+1\notin \mathrm{d}({\bf A})$.
 
 \item $m+\lambda=\theta m$. In this case, $\lambda=(\theta-1)m$. The change of variables $X_2=X_1-Y_1^{\theta-1}, Y_2=Y_1$ is such that either $(X_2,Y_2)=(T^n, T^m)$ or $X_2=T^n+c_{\lambda_1}T^{\lambda_1}+\ldots, Y_2=T^m$ with $\lambda_1<\lambda$. As an example of this case, let $X(t)=t^{7}, Y(t)=t^4+t$. We have $W(t)=-21t^7$ and $7+1=8=2.4\in \mathrm{d}({\bf A})$. Let $Y_1=T^4$. Then $T^4=t^4+t$, $T=t(t^{-3}+1)^{\frac{1}{4}}$, and $X_1(T)=T^7-\frac{1}{4}T^4+\frac{7}{16}T+\ldots$. If $X_2=X_1+\frac{1}{4}Y_1, Y_2=Y_1$, then 
 $X_2=T^7+\frac{7}{16}T+\cdots, Y_2=T^4$ and $mX_2'(T)Y_2(T)-nY_2'(T)X_2(T)=\frac{21}{2}T^4+\ldots$, with $4+1=5\notin \mathrm{d}({\bf A})$.
  
 \end{enumerate}
We shall prove that these two processes will eventually stop. This is clear for the first case since we are constructing a strictly increasing sequence  of nonnegative integers. In the second case, if $h\geq 2$ then this is clear since the set of integers in the interval $[\lambda, -m_2]$ is finite. Suppose that $h=1$, that is, $\gcd(m,n)=1$. If the process is infinite, then after a finite number of steps we will obtain a new parametrization of the curve of the form $\tilde{X}=T^n+\alpha T^{-l}+\ldots, \tilde{Y}=T^m$ with $l>nm$, which is a contradiction. 
 
It follows that either we get a parametrization $(\tau^n,\tau^m)$ of the curve $V(f)$ (which means that the equation of this curve is $W^n-Z^m$ with $\KK[X,Y]\simeq\KK[Z,W]$ and $\gcd(n,m)=1$), or
we get a new parametrization $Z(t)=t^n+a_1t^{\alpha_1}+\dots+a_n, W(t)=t^m+b_1t^{\beta_1}+\dots+b_m$ such that the degree of $W(t)=mZ'(t)W(t)-nW'(t)Z(t)$ is a non exact element of $I$.

We then get the follwong result.
 
\begin{teorema} (see also [2]) Let $X(t)=t^n+a_1t^{n-1}+\dots+a_n$, $Y(t)=t^m+b_1t^{m-1}+\dots+b_m$ be the equations of a polynomial curve in $\KK^2$ and let $f(X,Y)$ be the minimal polynomial of $X(t),Y(t)$, that is, $f(X,Y)$ is the resultant in $t$ of $(X-X(t),Y-Y(t))$.  Let ${\bf M}=X'(t){\bf A}+Y'(t){\bf A}$ be the ${\bf A}$-module generated by $X'(t),Y'(t)$. The following conditions are equivalent.
\begin{enumerate}[i)]
\item The equality $\mu(f)=\nu(f)$ holds.
 
\item Every element of the ideal $I=\mathrm{d}({\bf M})$ is exact.
 
\item The integers $n$ and $m$ are coprime and there exist an isomorphism $\KK[X,Y]\to \KK[Z,W]$ such that the image of $f(X,Y)$ is $W^n-Z^m$.
\end{enumerate} 
\end{teorema}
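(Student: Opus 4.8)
The plan is to run the cycle $(i)\Rightarrow(ii)\Rightarrow(iii)\Rightarrow(i)$; as it turns out, $(i)\Leftrightarrow(ii)$ and the hard implication $(ii)\Rightarrow(iii)$ are already contained, respectively, in the numerical identity and in the iterative construction developed above, so the only genuinely new ingredient is the easy converse $(iii)\Rightarrow(ii)$.

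\emph{The equivalence $(i)\Leftrightarrow(ii)$.} Since $\Gamma(f)$ is free for the arrangement $(r_0,\ldots,r_h)$ it is symmetric, hence $\mathrm g(\Gamma(f))=\mathrm C(\Gamma(f))/2=\mu(f)/2$. Substituting this into the formula $\nu(f)=\ell(\tilde{\mathbf N}/\mathbf N)+\mu(f)/2$ of \cite[Corollary 2]{b} and into the identification $\ell(\tilde{\mathbf N}/\mathbf N)=\mathrm g(\Gamma(f))-\mathrm{ne}(\mathbf M)$ obtained above, I get $\nu(f)=\mu(f)-\mathrm{ne}(\mathbf M)$. Therefore $\mu(f)=\nu(f)$ if and only if $\mathrm{ne}(\mathbf M)=0$, i.e. $\mathrm{NE}(\mathbf M)=\emptyset$, which is exactly $(ii)$.

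\emph{The implication $(iii)\Rightarrow(ii)$.} The numbers $\mu$ and $\nu$, the semigroup $\Gamma(f)$, and the modules $\mathbf N=\Omega_{\mathbf B/\KK}$ and $\tilde{\mathbf N}$ (hence $\mathrm{ne}(\mathbf M)$) depend only on the isomorphism class of $\mathrm V(f)$, so I may assume $f=W^n-Z^m$ with $\gcd(n,m)=1$, parametrized by $X(\tau)=\tau^n$, $Y(\tau)=\tau^m$. Then $S=\langle n,m\rangle$ and $\mathbf M=\tau^{n-1}\mathbf A+\tau^{m-1}\mathbf A$, so $I=(n-1+S)\cup(m-1+S)$; every element of $I$ has the form $n-1+s$ or $m-1+s$ with $s\in S$, and adding $1$ lands in $S$, so every element of $I$ is exact. (Equivalently one sees $(iii)\Rightarrow(i)$ at once: for $g=W^n-Z^m$ the Euler relation $nm\,g=mWg_W+nZg_Z$ gives $g\in(g_W,g_Z)$, hence $(g,g_W,g_Z)=(g_W,g_Z)=(W^{n-1},Z^{m-1})$ and $\mu(g)=\nu(g)=(n-1)(m-1)$.)

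\emph{The implication $(ii)\Rightarrow(iii)$, and the main obstacle.} This is exactly what the discussion preceding the statement accomplishes, and I would invoke it as follows. Assume every element of $I$ is exact. The change of parameter $t\mapsto t+\beta_1/m$ and the reparametrization $T=q(t)$ of Lemma \ref{isom} bring the parametrization to the shape $Y=T^m$, $X=T^n+\sum_{p<n}c_pT^p$ without altering $\Gamma(f)$, $I$, the exactness of its elements, or $\mu$ and $\nu$ (because $\mathrm dT/\mathrm dt$ has degree $0$ and the curve is unchanged). Set $\lambda=\max\{\,p<n\mid c_p\neq 0\,\}$. If $\lambda=-\infty$ then $(X,Y)=(T^n,T^m)$, and since $\Gamma(f)=\langle n,m\rangle$ must be a numerical semigroup we get $\gcd(n,m)=1$ and $f\simeq W^n-Z^m$, i.e. $(iii)$. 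Otherwise the polynomial $W(T)=mX'(T)Y(T)-nX(T)Y'(T)$ lies in $\mathbf M$ and has degree $\lambda+m-1$, so $\lambda+m\notin\Gamma(f)$ would make $\lambda+m-1$ a non-exact element of $I$, contradicting $(ii)$; hence $\lambda+m\in\Gamma(f)$, and the case analysis above then produces, via the shear $Y\mapsto Y+\frac{\lambda-n}{n}c_\lambda$ (when $\lambda+m=n$) or $X\mapsto X-c_\lambda Y^{\theta-1}$ (when $\lambda=(\theta-1)m$) followed by a renormalization of the type of Lemma \ref{isom}, a parametrization of the same shape with a strictly smaller value of $\lambda$. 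I expect the main obstacle to be precisely the termination of this iteration: when $h\geq 2$ all successive values of $\lambda$ stay inside the finite interval $[-m_2,\lambda]$, but the genuinely delicate case is $h=1$, where one must exclude an infinite descent that would eventually yield a parametrization $\tilde X=T^n+\alpha T^{-l}+\cdots$, $\tilde Y=T^m$ with $l>nm$ — impossible because $\mathrm V(f)$ has one place at infinity. Once termination is secured the iteration ends at $(T^n,T^m)$ with $\gcd(n,m)=1$, which is $(iii)$, and the cycle closes.
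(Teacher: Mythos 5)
Your proposal is correct and follows essentially the same route as the paper: the paper also gets $(i)\Leftrightarrow(ii)$ from the identity $\nu(f)=\ell(\tilde{\mathbf N}/\mathbf N)+\mu(f)/2$ combined with $\ell(\tilde{\mathbf N}/\mathbf N)=\mathrm g(\Gamma(f))-\mathrm{ne}(\mathbf M)$ and symmetry of $\Gamma(f)$, defers $(ii)\Rightarrow(iii)$ to the same iterative reduction of $\lambda$ (with the same termination arguments), and closes the cycle with $(iii)\Rightarrow(i)$ via $W^n-Z^m\in(W^{n-1},Z^{m-1})$, which is exactly your parenthetical Euler-relation remark. Your extra direct verification of $(iii)\Rightarrow(ii)$ is a harmless (and slightly more explicit) variant.
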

\begin{proof} i) $\Longleftrightarrow$ ii) is clear and ii) $\Longrightarrow$ iii) results from the calculations above. Finally iii) $\Longrightarrow$ i) because $W^n-Z^m\in (W^{n-1},Z^{m-1})$.
 \end{proof}
Let the notations be as above and let $W(t)=mX'(t)Y(t)-nY'(t)X(t)$. 

If $W(t)=0$, then $mX'(t)Y(t)=nY'(t)X(t)$. Hence $Y(t)^n-X(t)^m=0$. In particular, $f(X,Y)=Y^n-X^m$. 

If $W(t)\not=0$ and $W(t)$ is exact, then similar calculations as above show that there exists a change of variables in such a way that the new $W$ is either $0$ or its degree is a  non exact element. Assume that $f(X,Y)$ is not equivalent to a quasi-homogeneous polynomial, in particular we may assume that $W(t)$ is not exact. In the following we shall give a bound for the number of non exact elements of $I$.


\begin{proposicion} \label{bound}Let the notations be as above. If $\mathrm{ne}(\mathbf M)>0$ then  $\mathrm{ne}(\mathbf M)\geq 2^{h-1}$.
\end{proposicion}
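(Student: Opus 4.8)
The plan is to reduce, by the two-case analysis carried out just above, to a parametrization in which one non-exact degree is visible, and then to induct on the number $h$ of characteristic exponents of $f$. By that analysis, after an automorphism of $\KK[X,Y]$ we may assume that the polynomial $W(t)=mX'(t)Y(t)-nX(t)Y'(t)=(mY(t))X'(t)-(nX(t))Y'(t)\in\mathbf M$ has degree $w_0$ with $w_0+1\notin\Gamma(f)$; the only alternative there is the parametrization $(\tau^n,\tau^m)$, which makes $f$ quasi-homogeneous and $\mathrm{ne}(\mathbf M)=0$, contrary to hypothesis. Hence $w_0\in\mathrm{NE}(\mathbf M)$, so $\mathrm{ne}(\mathbf M)\geq 1=2^{0}$, and the statement holds when $h=1$.

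For $h\geq 2$ I would argue by induction on $h$. Put $d_h=\gcd(r_0,\dots,r_{h-1})$, $e_h=d_h/d_{h+1}=d_h$, and $g=\App(f,d_h)$. By the Abhyankar--Moh theory underlying Algorithm~\ref{alg-di}, $g$ again defines a polynomial curve with one place at infinity whose semigroup is the free semigroup $\Gamma(g)=\langle r_0/d_h,\dots,r_{h-1}/d_h\rangle$ (minimally generated by these $h$ integers, hence with $h-1$ characteristic exponents), with module of Kähler differentials $\mathbf M_g$; write $\mathrm{ne}_g$ for the number of non-exact degrees of $\mathbf M_g$. When $h\geq 3$, $\Gamma(g)$ has embedding dimension $\geq 3$, so by the characterization of the equality $\mu(f)=\nu(f)$ proved above one automatically has $\mathrm{ne}_g>0$, hence $\mathrm{ne}_g\geq 2^{h-2}$ by the inductive hypothesis; when $h=2$, $\Gamma(g)=\langle r_0/d_2,r_1/d_2\rangle$ is generated by two coprime integers, $g$ may be quasi-homogeneous, and that case is handled directly. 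The core claim is that each non-exact degree of $\mathbf M_g$ produces at least two non-exact degrees of $\mathbf M$; granting it, $\mathrm{ne}(\mathbf M)\geq 2\,\mathrm{ne}_g\geq 2^{h-1}$ for $h\geq 3$, and for $h=2$ a direct argument using $w_0$ together with the freeness relation $e_2r_2\in\langle m,n\rangle$ and the symmetry of $\Gamma(f)$ exhibits a second non-exact degree $w_1\neq w_0$, completing the induction.

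The mechanism behind the core claim is the gluing $\Gamma(f)=d_h\Gamma(g)+r_h\NN$ with $\gcd(d_h,r_h)=1$, together with the approximate-root relation $g^{e_h}=\prod_{k<h}g_k^{a_{hk}}+(\text{terms of strictly smaller }t\text{-degree})$: from a gap $s$ of $\Gamma(g)$ with $s-1$ realized in $\mathrm{d}(\mathbf M_g)$, multiplying the corresponding element by powers of $g$ and applying this relation should produce elements of $\mathbf M$ whose degrees are the predecessors of the two distinct gaps of $\Gamma(f)$ lying over $s$. The concrete device is the family of Wronskian elements $W_{jk}=r_kP_j'P_k-r_jP_jP_k'\in\mathbf M$, where $P_0=Y(t)$, $P_1=X(t)$ and $P_k=g_k(X(t),Y(t))$: each $P_j'$ lies in $\mathbf M$ by the chain rule, the leading terms of $r_kP_j'P_k$ and $r_jP_jP_k'$ coincide so $\mathrm{d}(W_{jk})<r_j+r_k-1$, and $\mathrm{d}(W_{jk})$ is computable from the $t$-expansions of the $P_k$ prescribed by the Newton--Puiseux exponents.

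The main obstacle is the doubling step. One must check that the two lifts of each non-exact degree of $\mathbf M_g$ along the gluing really lie in $\mathrm{d}(\mathbf M)$, that their successors fall outside $\Gamma(f)$, and --- most delicately --- that the family so obtained (the lifts of all $\mathrm{ne}_g$ non-exact degrees of $\mathbf M_g$, together with $w_0$) has no repetitions, so that its cardinality is exactly $2\,\mathrm{ne}_g$. Establishing this requires computing $\mathrm{d}(W_{jk})$ precisely from the Puiseux data and proving that $s\mapsto\{d_h s,\ (r_h\text{-shifted companion of }d_h s)\}$ is injective from the shifted gaps realized in $\mathbf M_g$ into those realized in $\mathbf M$; this is where the freeness of both $\Gamma(f)$ and $\Gamma(g)$ and the symmetry of free semigroups are used, and the computation for $h=2$ (where $g$ is quasi-homogeneous) is a degenerate instance of it.
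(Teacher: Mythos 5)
Your argument has a genuine gap at exactly the point you flag as ``the main obstacle'': the doubling step is not proved, and it is the entire content of the proposition. The claim that each non-exact degree of $\mathbf M_g$ (for $g=\App(f,d_h)$) lifts to at least two non-exact degrees of $\mathbf M$ is not a routine verification one can defer. The module $\mathbf M_g$ is the differential module of a \emph{different} parametrized curve, living in a different copy of $\KK[t]$ with a parametrization of smaller degree; there is no natural degree-compatible map from $\mathbf M_g$ into $\mathbf M$, and the element $\frac{d}{dt}\,g(X(t),Y(t))$ of $\mathbf M$ that the chain rule does give you is by construction an \emph{exact} element, so it carries no information about $\mathrm{NE}(\mathbf M_g)$. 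Your proposed bridge (the gluing $\Gamma(f)=d_h\Gamma(g)+r_h\NN$ plus the approximate-root relation plus the Wronskians $W_{jk}$) is a plausible research direction, but as written it is a list of things that would need to be true, not an argument; in particular the injectivity and non-exactness of the two lifts, which you correctly identify as the delicate points, are exactly what would constitute a proof. An induction whose inductive step is conjectural does not establish the bound.

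The paper's proof avoids the induction entirely and is much more direct. After the normalization carried out before the proposition, one has a single explicit non-exact element $W$ with $\mathrm{d}(W)=m+\lambda-1$ and $m+\lambda\notin\Gamma(f)$, where $\lambda\ge -m_2$. Writing $m+\lambda$ in its standard representation with respect to the free arrangement $(r_0,\dots,r_h)$, the coefficient of $r_0=m$ is negative. One then multiplies $W$ inside $\mathbf M$ by the monomials $g_2^{\alpha_2}\cdots g_h^{\alpha_h}$ in the approximate roots, with $0\le\alpha_i<e_i$ (in the boundary case $\lambda=-m_2$, by $Y^{\epsilon}g_3^{\alpha_3}\cdots g_h^{\alpha_h}$ with $\epsilon\in\{0,1\}$ after checking that the coefficient of $m$ is at most $-2$). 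Each product lies in $\mathbf M$, its degree plus one still has a negative $r_0$-coefficient in its standard representation, hence is a gap of $\Gamma(f)$, and uniqueness of standard representations in a free semigroup makes all these degrees distinct. Since each $e_i\ge 2$, this produces at least $\prod_{i=2}^h e_i\ge 2^{h-1}$ non-exact degrees. If you want to salvage your write-up, replace the inductive scheme by this one-step multiplication argument: the freeness of $\Gamma(f)$ that you invoke only at the end is precisely what makes the direct count work.
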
 
\begin{proof} Consider as above the parametrization $X(T)=T^n+c_{\lambda}T^{\lambda}+\dots, Y(T)=T^m$ and let $\mathrm{d}(W)=m+\lambda-1$. We have $m+\lambda\notin \Gamma(f)$. Furthermore, $\lambda\geq -m_2$. Let $g_i(X,Y)={\mathrm App}(f,d_i)$ for all $i\in\lbrace 1,\ldots,h\rbrace$. We have two cases.

\begin{enumerate}

\item $\lambda>-m_2$. We have $m+\lambda=-am+bn$ with $a,b\in{\mathbb N}, a>0, 0\leq b\leq e_1$. Hence, for all $(\alpha_2,\ldots,\alpha_h)\in {\mathbb N}^{h-1}$, if $\alpha_i<e_i$, then for every $i\in\lbrace 2,\ldots,h\rbrace$, the degree of $g_2^{\alpha_2}\cdots g_h^{\alpha_h}W$ is not exact, hence $\mathrm{ne}({\mathbf M})\geq 2^{h-1}$.

 \item  $\lambda=-m_2$. We have $m+\lambda=m-m_2=-am+bn+cr_2$ with $a,b,c\in{\mathbb N}$, $a>0$, $0\leq b<e_1$, $0\leq c<e_2$. But $-m_2=r_2-(e_1-1)r_1$. Thus $m+r_2-(e_1-1)r_1=-am+bn+cr_2$, and since $(c-1)r_2$ is not divisible by $d_2$, we get $c=1$, whence $(a+1)m=(e_1-1+b)r_1=(e_1-1+b)n$. If $b=0$, then $(e_1-1)n$ is divisible by $m$, which is a contradiction. Hence $e-1-1+b\geq 2$, which implies that $a\geq 2$. Finally $m+\lambda=-am+bn+r_2$ with $a\geq 2$. Note that 
 $\mathrm{d}(YW)=-(a-1)m+bn+r_2$, and thus $\mathrm{d}(YW)$ is not exact. Furthermore, for all $(\alpha_3,\ldots,\alpha_h)\in {\mathbb N}^{h-2}$, if $\alpha_i<e_i$ for all $i\in \lbrace 3,\ldots,h\rbrace$, then the degree of $Yg_3^{\alpha_3}\cdots g_h^{\alpha_h}W$ is not exact. It follows that 
 $\mathrm{ne}(\mathbf M)\geq 2^{h-1}$. \qedhere
 \end{enumerate}
\end{proof}
 
\begin{corolario}\label{non exact} With the notations above. We have the following.
\begin{enumerate}[(1)]
 \item If $\mathrm{ne}(\mathbf M)=1$, then $h=1$, that is, $S=\langle m,n\rangle $ with $\gcd(m,n)=1$. Furthermore, $\mathrm{NE}({\bf M})=\lbrace \mathrm{F}(\Gamma(f))-1\rbrace$.
 
\item If $\mathrm{ne}(\mathbf M)=2$, then $h\in\{1,2\}$, that is,  either $\Gamma(f)=\langle m,n\rangle$ with $\gcd(m,n)=1$ or $\Gamma(f)=\langle m,n,r_2\rangle$ with  $d_3=1$. Furthermore, if $h=1$ (respectively $h=2$), then $\mathrm{NE}({\bf M})$ is either $\lbrace \mathrm{F}(\Gamma(f))-1, \mathrm{F}(\Gamma(f))-m-1\rbrace$ or $\lbrace \mathrm{F}(\Gamma(f))-1, \mathrm{F}(\Gamma(f))-n-1\rbrace$ (respectively $\mathrm{NE}({\bf M})$ is either $\lbrace \mathrm{F}(\Gamma(f))-1, \mathrm{F}(\Gamma(f))-n-1\rbrace$ or $\lbrace \mathrm{F}(\Gamma(f))-1, \mathrm{F}(\Gamma(f))-m-1\rbrace$ or $\lbrace \mathrm{F}(\Gamma(f))-1, \mathrm{F}(\Gamma(f))-r_2-1\rbrace$).
\end{enumerate} 
\end{corolario}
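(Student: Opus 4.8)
The plan is to deduce everything from Proposition~\ref{bound} together with the symmetry of $S=\Gamma(f)$ (which is free for the arrangement $(r_0,\dots,r_h)$) and the fact that $I=\mathrm{d}(\mathbf M)$ is a relative ideal of $S$, so $I+S\subseteq I$. The constraint on $h$ is immediate: since $\mathrm{ne}(\mathbf M)>0$ in both cases, Proposition~\ref{bound} gives $2^{h-1}\le\mathrm{ne}(\mathbf M)$, so $\mathrm{ne}(\mathbf M)=1$ forces $h=1$ (whence $S=\langle m,n\rangle$, $\gcd(m,n)=d_{h+1}=1$), and $\mathrm{ne}(\mathbf M)=2$ forces $h\in\{1,2\}$ (whence $S=\langle m,n\rangle$ with $\gcd(m,n)=1$, or $S=\langle m,n,r_2\rangle$ with $d_3=1$).

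Next I would prove the key lemma: whenever $\mathrm{NE}(\mathbf M)\ne\emptyset$, its largest element is $\mathrm{F}(S)-1$. Let $e_0=\max\mathrm{NE}(\mathbf M)$; by definition $e_0+1\notin S$, so $e_0\le\mathrm{F}(S)-1$. If $e_0<\mathrm{F}(S)-1$, set $\sigma=\mathrm{F}(S)-(e_0+1)\ge1$; symmetry of $S$ gives $\sigma\in S$ (since $e_0+1\notin S$), hence $e_0+\sigma\in I$ by the ideal property, while $(e_0+\sigma)+1=\mathrm{F}(S)\notin S$ shows $e_0+\sigma\in\mathrm{NE}(\mathbf M)$ with $e_0+\sigma>e_0$, a contradiction. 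This settles statement~(1): if $\mathrm{ne}(\mathbf M)=1$, the unique non exact degree is $\mathrm{F}(\Gamma(f))-1$.

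For statement~(2) I would write $\mathrm{NE}(\mathbf M)=\{\mathrm{F}(S)-1,\,e\}$ with $e<\mathrm{F}(S)-1$, and set $\eta=\mathrm{F}(S)-1-e\ge1$; symmetry gives $\eta=\mathrm{F}(S)-(e+1)\in S$. The crux is to show that $\eta$ cannot be written as a sum of two nonzero elements of $S$. Indeed, if $\eta=s_1+s_2$ with $s_1,s_2\in S$ and $s_1\ge1$, then $e+s_2\in I$ (ideal property) and $(e+s_2)+1=\mathrm{F}(S)-s_1\notin S$ (symmetry, as $s_1\in S$), so $e+s_2\in\mathrm{NE}(\mathbf M)$; but $e+s_2=\mathrm{F}(S)-1-s_1\le\mathrm{F}(S)-2<\mathrm{F}(S)-1$, forcing $e+s_2=e$, i.e.\ $s_2=0$. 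Writing $\eta=\sum_i a_ir_i$ with $a_i\in\NN$ (using $S=\langle r_0,\dots,r_h\rangle$), this forces $\sum_i a_i=1$, i.e.\ $\eta=r_j$ for some $j$; hence $e=\mathrm{F}(\Gamma(f))-1-\eta$ with $\eta\in\{m,n\}$ when $h=1$ and $\eta\in\{m,n,r_2\}$ when $h=2$, which is exactly the list in the statement.

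I do not anticipate a genuine obstacle. The only points requiring care are the repeated appeals to the symmetry of $\Gamma(f)$ and the remark that $S=\langle r_0,\dots,r_h\rangle$, both already available, together with the book-keeping on inequalities. The substance of the corollary is carried by Proposition~\ref{bound} (for the bound on $h$) and by the ``pushing-up'' trick inside the ideal $I$ (for locating $\mathrm{NE}(\mathbf M)$); the rest is elementary semigroup combinatorics.
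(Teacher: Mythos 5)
Your proof is correct, and it takes a genuinely different route from the paper's. The paper argues through the explicit differential $W(T)=mX_1'(T)Y_1(T)-nY_1'(T)X_1(T)$: after the normalisations of the preceding pages its degree $m+\lambda-1$ is a non exact element, one writes $m+\lambda$ in a standard-type representation $-am+bn$ (or $-an+bm+cr_2$) with a single negative coefficient, and then multiplies $W$ by $X$, $Y$ or the approximate roots $g_i$ to manufacture further non exact degrees until the count exceeds $\mathrm{ne}(\mathbf M)$; the surviving constraints on $(a,b,c)$ locate $\mathrm{NE}(\mathbf M)$ relative to $\mathrm{F}(\Gamma(f))$. You replace all of this by two purely combinatorial observations about a relative ideal $I$ of a symmetric numerical semigroup: first, that $\mathrm{F}(S)-1$ is the top non exact element whenever there is one (add $\sigma=\mathrm{F}(S)-(e_0+1)\in S$, which lies in $S$ precisely by symmetry, to any non exact $e_0$); second, that the gap $\eta=\mathrm{F}(S)-1-e$ down to a second non exact element must be an atom of $S$, since any splitting $\eta=s_1+s_2$ with $s_1,s_2\in S^*$ would produce a third non exact degree $e+s_2$. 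Your argument is shorter, never touches the Newton--Puiseux exponents or the polynomial $W$, and proves a more general fact (valid for any relative ideal of a free, hence symmetric, semigroup containing a non exact element); what the paper's route buys is the explicit representation $m+\lambda=-am+bn+cr_2$, which is then reused verbatim in the finer case-by-case analysis of $(m,n,r_2)$ that follows the corollary. Both proofs lean on Proposition~\ref{bound} in exactly the same way for the bound on $h$.
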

\begin{proof} 
(1) The first assertion results from Proposition \ref{bound}, and obviously $\mathrm{NE}({\bf M})=\lbrace m+\lambda-1\rbrace$. If $m+\lambda<\mathrm{F}(\Gamma(f))$ then $m+\lambda=-am+bn$ with  $a\geq 1$ and $b\leq m-1$. If $a>1$ then $XW$ is not exact and $XW\not=W$. This is a contradiction. If $a=1$ then $b<m-1$ (otherwise $m+\lambda=-a+(m-1)n=\mathrm{F}(\Gamma(f))$ which contradicts the hypothesis). But $YW$ is not exact and $YW\not=W$. This is again a contradiction. 

(2) The first assertion results from Proposition \ref{bound}. To prove the second assertion, let $W_1$ have a non exact degree with $\mathrm{d}(W_1)<\mathrm{F}(\Gamma(f))-1$ and $\mathrm{d}(W_1)$ is minimal in $\mathrm{ne}(\mathbf M)$. Suppose first that $h=1$. We have $\mathrm{d}(W_1)+1=-am+bn$ with $a\geq 1$ and $0\leq b\leq m-1$. If $a\geq 2$ and $b<m-1$, then $XW_1,YW_1$ have also non exact degrees, and thus $\mathrm{ne}(\mathbf M)\geq 3$, which is a contradiction. Consequently  either $a=1$ or $b=m-1$. If $a=1$, then $b<m-1$, whence $YW_1,\ldots,Y^{m-1-b}W_1$ have also non exact degrees. This forces $b$ to be equal to $m-2$. Consequently $\mathrm{NE}({\bf M})=\lbrace \mathrm{F}(\Gamma(f))-1,\mathrm{F}(\Gamma(f))-1-n\rbrace$. If $b=m-1$, then we prove in a similar way that $\mathrm{NE}({\bf M})=\lbrace \mathrm{F}(\Gamma(f))-1,\mathrm{F}(\Gamma(f))-1-m\rbrace$.
 
Suppose now that $h=2$. Then $\mathrm{d}(W_1)+1=-an+bm+cr_2$ with $a\geq 1$, $0\leq b\leq e_1-1$, $0\leq c\leq e_2-1$, and $(a,b,c)\not=(-1,e_1-1,e_2-1)$ (otherwise $\mathrm{d}(W_1)+1=\mathrm{d}(\mathrm{F}(\Gamma(f)))$. This forces $\mathrm{ne}(\mathbf M)$ to be equal to $1$ by the minimality of $\mathrm{d}(W_1)$). On the other hand, $\mathrm{ne}(\mathbf M)=2$ forces  $(a,b,c)$ to be either $(-1,e_1-2,e_2-1)$ (and thus $\mathrm{NE}({\bf M})=\lbrace \mathrm{F}(\Gamma(f))-1, \mathrm{F}(\Gamma(f))-m-1\rbrace$) or $(-2,e_1-1,e_2-1)$ (hence $\mathrm{NE}({\bf M})=\lbrace \mathrm{F}(\Gamma(f))-1, \mathrm{F}(\Gamma(f))-n-1\rbrace$) or $(-1,e_1-1,e_2-2)$ (in this case $\mathrm{NE}({\bf M})=\lbrace \mathrm{F}(\Gamma(f))-1, \mathrm{F}(\Gamma(f))-r_2-1\rbrace$). 
 \end{proof}
 
In the following we shall give more precise information when $\mathrm{ne}(\mathbf M)\in\{1,2\}$.
 
\noindent{\bf The case of one non exact element.} In this case $h=1$, $\Gamma(f)=\langle m,n\rangle$ with $m<n$ and $\gcd(m,n)=1$. Furthermore, $m+\lambda=\mathrm{F}(\Gamma(f))=-m+(m-1)n<m+n$ because $\lambda <n$. This implies that $(m-2)n<2m<2n$. In particular, $m<4$. If $m=2$, then $n=2p+1$ for some $p\geq 1$. If $m=3$, then $n<2m=6$ and $n>m=3$ implies that either $n=4$ or $n=5$. 
 
\noindent {\bf The case of two non exact elements and $h=1$.} In this case, $\Gamma(f)=\langle m,n\rangle$ with $m<n$ and $\gcd(m,n)=1$. Furthermore, By Corollary \ref{non exact},  $m+\lambda\in\lbrace  \mathrm{F}(\Gamma(f))-n, \mathrm{F}(\Gamma(f))-m\rbrace$.
 
 \begin{enumerate}
 
 
 \item If $m+\lambda=\mathrm{F}(\Gamma(f))-m=-2m+(m-1)n$, then we get, using the fact that $\lambda<n$, $6>(n-3)(m-2)$. Hence $(m,n)$ is either  $(2,2p+1), p\geq 1$ or $(3,4)$ or $(3,5)$ or $(4,5)$.
 
 \item If $m+\lambda=\mathrm{F}(\Gamma(f))-n=-m+(m-2)n$, then we get that $4>(n-2)(m-3)$. In particular, $(m,n)$ is either $(2,2p+1)$ with $p\geq 1$, or $(3,n)$ with $n\geq 4$ and $\gcd(3,n)=1$, or $(4,5)$.
 
 \end{enumerate}
 
 
 
 
 
 \medskip
 
 \noindent {\bf The case of two non exact elements and $h=2$}. Let $\Gamma(f)=\langle m,n,r_2\rangle$ and let the notations be as above. Since $\mathrm{F}(\Gamma(f))-1$ is a non exact element of $I$, we have $m+\lambda\in\lbrace \mathrm{F}(\Gamma(f)), \mathrm{F}(\Gamma(f))-m,\mathrm{F}(\Gamma(f))-n, \mathrm{F}(\Gamma(f))-r_2\rbrace$. 
 
 \begin{enumerate}
 
 \item If $m+\lambda=\mathrm{F}(\Gamma(f))=-m+(e_1-1)n+(e_2-1)r_2$, then $\lambda=-m_2=r_2-(e_1-1)n$ (because otherwise $\lambda > -m_2$, whence $d_2$ divides $\lambda$ and consequently $m+\lambda$ is in $d_2{\mathbb Z}$, which is a contradiction). This implies that  $m+r_2-(e_1-1)n=-m+(e_1-1)n+(e_2-1)r_2$. Since $d_2=\gcd(m,n)$ does not divide  $ir_2$ for all $1\leq i\leq e_2-1$, we deduce that $e_2=2$. This implies that  $m-(e_1-1)n=0$, which is a contradiction since $m<n$.
  
 \item  Suppose that $m+\lambda=\mathrm{F}(\Gamma(f))-r_2=-m+(e_1-1)n+(e_2-2)r_2$. If $e_2\not= 2$, then by the same argument as in (1), $\lambda=-m_2=r_2-(e_1-1)n$. Hence $m+r_2-(e_1-1)n=-m+(e_1-1)n+(e_2-2)r_2$. Since $d_2$ does not divide $ir_2$ for all $1\leq i\leq e_2-1$, we obtain $e_2=3$, but $m-(e_1-1)n=0$, which is a contradiction. It follows that $e_2=2$, whence $d_2=2$ and $\lambda>-m_2$ (because $\lambda=-2m+(e_1-1)n$ and $m_2$ is not divisible by $d_2$). But $m+\lambda=\mathrm{F}(\Gamma(f))-r_2=-m+(e_1-1)n<m+n$, and thus  
 $$
 -2m+\left(\frac{m}{2}-1\right)n<n.
 $$
Let $a=\frac{m}{2}$ and $b=\frac{n}{2}$. Since $\gcd(m,n)=2$, $a$ and $b$ are coprime. The equality above implies that $2a>(a-2)b$. But $a<b$. Hence $2b>(a-2)b$, that is, $a<4$. Note that $a>1$ because $h=2$. If $a=2$, then $b=2p+1$ for some $p\in{\mathbb N}$. If $a=3$, then $b<6$, and consequently $b\in\{4,5\}$. This implies that $(m,n,r_2)$ satisfies one of the following conditions:
 
\begin{enumerate}
 \item $m=4,n=4p+2, r_2=2q+1$ and $8p+4>2k+1$.
 
 \item $m=6,n=8, r_2=2p+1$ and $24>2p+1$.
 
 \item $m=6,n=10, r_2=2p+1$ and $30>2p+1$.

\end{enumerate}
 
\item  If $m+\lambda=\mathrm{F}(\Gamma(f))-m=-2m+(e_1-1)r_1)+(e_2-1)r_2$, then $\lambda=-m_2=r_2-(e_1-1)n$, which implies that 
$m+r_2-(e_1-1)n=-2m+(e_1-1)r_1)+(e_2-1)r_2$. Hence $e_2=d_2=2$ and $3m=2(e_1-1)n=2(\frac{m}{2}-1)n$. Consequently $3(\frac{m}{2}-1)+3=(\frac{m}{2}-1)n$. Finally, $(n-3)(\frac{m}{2}-1)=3$. The only solution is $m=4$, $n=6$. Hence $r_2=2p+1$ with $12>r_2$.
 
\item If $m+\lambda=\mathrm{F}(\Gamma(f))-n=-m+(e_1-2)r_1)+(e_2-1)r_2$ then $\lambda=-m_2=r_2-(e_1-1)n$, which implies that $m+r_2-(e_1-1)n=-m+(e_1-2)r_1+(e_2-1)r_2$. Thus $e_2=d_2=2$ and $2m=(2e_1-3)n=(m-3)n$. This yields  $6=(m-3)(n-2)$. All possible cases lead to a contradiction.
\end{enumerate}
 
These results can be summerized into the following theorem.
 
 \begin{teorema}  Let $X(t)=t^n+a_1t^{n-1}+\dots+a_n$, $Y(t)=t^m+b_1t^{m-1}+\dots+b_m$, and assume that $m<n$ and $\gcd(m,n)<m$. Let $f(X,Y)$ be the monic irreducible polynomial of $\KK[X,Y]$ such that $f(X(t),Y(t))=0$, and let $\Gamma(f)$ be the semigroup associated with $f$. Assume that $\Gamma(f)$ is a numerical semigroup and let $\Gamma(f)=\langle m=r_0,n=r_1.r_2,\ldots,r_h\rangle$ where $r_2,\ldots,r_h$ are constructed as in Algorithm \ref{alg-di}. Let $\mu(f)=\dim_{\KK}{\KK[X,Y]}/{(f_X,f_Y)}$ and $\nu(f)=\dim_{\KK}\KK[X,Y]/{(f,f_X,f_Y)}$. Assume that $\mu(f)>\nu(f)$. 
\begin{enumerate}[i)]
\item  If $\mu(f)=\nu(f)+1$, then $h=1$.
\item If $\mu(f)=\nu(f)+2$, then $h\in\{1,2\}$.
\end{enumerate} 
Moreover, the following also holds.
\begin{enumerate}[(1)]
\item If $\mu(f)=\nu(f)+2$, then $\Gamma(f)=\langle m,n\rangle$ and one of the following conditions holds:
 
\begin{itemize}
\item $(m,n)=(2,2p+1), p\geq 1$,
\item $(m,n)=(3,4)$,
\item $(m,n)=(3,5)$.
\end{itemize}

\item If $\mu(f)=\nu(f)+2$ and  $h=1$, then  $\Gamma(f)=\langle m,n\rangle$ and one of the following conditions holds:
 
\begin{itemize}
\item $(m,n)=(2,2p+1), p\geq 1$,
\item $(m,n)=(3,4)$,
\item $(m,n)=(3,5)$,
\item $(m,n)=(4,5)$,
\item $(m,n)=(3,n)$, with $\gcd(3,n)=1$.
\end{itemize}

\item If $\mu(f)=\nu(f)+1$ and $h=2$m then  $\Gamma(f)=\langle m,n,r_2\rangle$ and one of the following conditions holds:
 \begin{itemize}
\item $(m,n,r_2)=(4,4p+2, 2q+1)$, $p\geq 1$ and $8p+4>2q+1$,
\item $(m,n,r_2)=(6,8,2p+1)$, $p\leq 11$,
\item $(m,n,r_2)=(6,10,2p+1)$, $p\leq 14$,
\item $(m,n,r_2)=(4,6,2p+1)$, $p\leq 5$.
\end{itemize}
\end{enumerate}
\end{teorema}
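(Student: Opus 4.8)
The proof is essentially an assembly of the material developed above; the plan is as follows. First I would record the translation $\mu(f)-\nu(f)=\mathrm{ne}(\mathbf M)$: we have $\nu(f)=\ell(\tilde{\mathbf N}/\mathbf N)+\mu(f)/2$, and $\ell(\tilde{\mathbf N}/\mathbf N)$ was identified with the number of gaps $s$ of $\Gamma(f)$ with $s-1\notin\Gamma(f)$, which equals $\mathrm g(\Gamma(f))-\mathrm{ne}(\mathbf M)$; since $\Gamma(f)$ is free, hence symmetric, $\mathrm g(\Gamma(f))=\mathrm C(\Gamma(f))/2=\mu(f)/2$, so $\nu(f)=\mu(f)-\mathrm{ne}(\mathbf M)$. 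Thus the standing hypothesis $\mu(f)>\nu(f)$ reads $\mathrm{ne}(\mathbf M)>0$, and $\mu(f)=\nu(f)+k$ reads $\mathrm{ne}(\mathbf M)=k$. With this in hand, i) and ii) are immediate from Proposition \ref{bound}: $\mathrm{ne}(\mathbf M)=1$ forces $2^{h-1}\le 1$, i.e. $h=1$; $\mathrm{ne}(\mathbf M)=2$ forces $2^{h-1}\le 2$, i.e. $h\le 2$, and the hypothesis $\gcd(m,n)<m$ gives $d_1>d_2$, so $h\ge 1$ and hence $h\in\{1,2\}$.

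For the remaining statements I would work with the normal form obtained from the change of variables attached to Lemma \ref{isom}: after an isomorphism $\KK[X,Y]\to\KK[Z,W]$ we may assume $Y(T)=T^m$, $X(T)=T^n+c_\lambda T^\lambda+\cdots$ with $c_\lambda\ne 0$ and $\lambda<n$ (and $\lambda\ge -m_2$ when $h=2$), and that $W=mX'Y-nXY'$ has $\mathrm d(W)=m+\lambda-1$, a non-exact element of $I$. Corollary \ref{non exact} then pins $m+\lambda$ down: when $\mathrm{ne}(\mathbf M)=1$ one gets $m+\lambda=\mathrm F(\Gamma(f))$; when $\mathrm{ne}(\mathbf M)=2$ and $h=1$ one gets $m+\lambda\in\{\mathrm F(\Gamma(f))-m,\mathrm F(\Gamma(f))-n\}$; and when $\mathrm{ne}(\mathbf M)=2$ and $h=2$ one gets $m+\lambda\in\{\mathrm F(\Gamma(f)),\mathrm F(\Gamma(f))-m,\mathrm F(\Gamma(f))-n,\mathrm F(\Gamma(f))-r_2\}$. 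Into each of these I substitute the free-semigroup value $\mathrm F(\Gamma(f))=\sum_{i=1}^h(e_i-1)r_i-r_0$ and impose $\lambda<n$; in the $h=2$ case I also use that $d_2=\gcd(m,n)$ does not divide $i r_2$ for $1\le i\le e_2-1$, which both decides in each branch whether $\lambda=-m_2$ or $\lambda>-m_2$ and, after substituting $-m_2=r_2-(e_1-1)n$, rules out the branches $m+\lambda=\mathrm F(\Gamma(f))$ and $m+\lambda=\mathrm F(\Gamma(f))-n$ (each forces $m=(e_1-1)n$, impossible as $m<n$) while forcing $e_2=2$ in the two surviving branches.

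What remains in each branch is a single diophantine constraint to be solved with $m<n$ and the relevant coprimality. From $m+\lambda=\mathrm F(\Gamma(f))$ one gets $(m-2)(n-2)<4$, hence $(m,n)\in\{(2,2p+1),(3,4),(3,5)\}$, which is the list in (1) (the case $\mathrm{ne}(\mathbf M)=1$). From the branches $m+\lambda=\mathrm F(\Gamma(f))-m$ and $m+\lambda=\mathrm F(\Gamma(f))-n$ with $h=1$ one gets $(m-2)(n-3)<6$ and $(m-3)n<2m$ respectively, yielding the list in (2). For $h=2$, writing $a=m/2$ and $b=n/2$ (coprime, $1<a<b$) the branch $m+\lambda=\mathrm F(\Gamma(f))-r_2$ gives $2a>(a-2)b$, hence $a\in\{2,3\}$ and the families $(4,4p+2,2q+1)$, $(6,8,2p+1)$, $(6,10,2p+1)$, while the branch $m+\lambda=\mathrm F(\Gamma(f))-m$ gives the single equation $(n-3)(m/2-1)=3$, whose only solution is $(m,n)=(4,6)$, producing $(4,6,2p+1)$; in each case the stated bound on $r_2$ is just $r_2<\mathrm F(\Gamma(f))$.

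The main obstacle is the $h=2$ analysis of the preceding paragraph. It is the one point where one has to use, simultaneously, the standard representation in the free semigroup $\Gamma(f)$, the exact form of $\mathrm F(\Gamma(f))$, the non-divisibility $d_2\nmid i r_2$, and the constraint $\lambda\ge -m_2=r_2-(e_1-1)n$ coming from the normal form; the forced equality $e_2=2$ in the two surviving branches is the delicate step. By contrast the $h=1$ cases are routine, although even there one must keep the two sub-branches $\mathrm d(W)=\mathrm F(\Gamma(f))-m-1$ and $\mathrm d(W)=\mathrm F(\Gamma(f))-n-1$ separate; and i), ii) together with the overall shape of the argument are direct quotations of Proposition \ref{bound} and Corollary \ref{non exact}.
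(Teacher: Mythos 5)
Your proposal follows the paper's own route essentially step for step: the paper's ``proof'' of this theorem is precisely the three case discussions that precede it (one non-exact element; two non-exact elements with $h=1$; two non-exact elements with $h=2$), combined with Proposition \ref{bound} and Corollary \ref{non exact}, and your reduction $\mu(f)-\nu(f)=\mathrm{ne}(\mathbf M)$ via Berger's formula and the symmetry of the free semigroup $\Gamma(f)$ is exactly the computation the paper carries out after quoting \cite[Corollary 2]{b}. Your branch-by-branch inequalities ($2a>(a-2)b$, $(n-3)(\frac m2-1)=3$, the elimination of the branches $m+\lambda=\mathrm F(\Gamma(f))$ and $m+\lambda=\mathrm F(\Gamma(f))-n$ when $h=2$, the forced $e_2=2$) coincide with the paper's, and you correctly read items (1) and (3) of the statement as the cases $\mu=\nu+1$ and $\mu=\nu+2$ with $h=2$, which is clearly the intended meaning despite the typos.

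One point deserves attention. In the branch $h=1$, $m+\lambda=\mathrm F(\Gamma(f))-n=-m+(m-2)n$, the only constraint you invoke is $\lambda<n$, which gives $(m-3)n<2m$, i.e.\ $(m-3)(n-2)<6$; this admits $(m,n)=(4,7)$ in addition to the pairs listed in (2). The paper asserts the stronger bound $(n-2)(m-3)<4$, which is what excludes $(4,7)$, but that bound does not follow from $\lambda<n$ alone, so your claim that the inequality ``yields the list in (2)'' is not quite accurate as written: either you need a further argument ruling out $(4,7)$ --- note that for $(m,n)=(4,7)$ the products $X^aY^bW$ account for only the two non-exact degrees $\mathrm F-1-n$ and $\mathrm F-1$, so such an argument would have to produce a third non-exact element of $\mathrm d(\mathbf M)$ not of that form --- or the list in (2) must be enlarged. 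Apart from this single branch, the proposal is a faithful and correct assembly of the paper's argument.
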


  \end{document}